\newcommand{\proj}{\mathbb{P}}
\newcommand{\seq}{\subseteq}
\newtheorem{thm}{Theorem}[section]
\newtheorem*{thm-nl}{Theorem}
\newtheorem*{prop-nl}{Proposition}
\newtheorem{lem}[thm]{Lemma}
\def\OO{\mathcal{O}}
\def\cM{\mathcal{M}}
\def\Pic0{{\rm Pic}^0(X)}
\newtheorem{cor}[thm]{Corollary}
\newtheorem*{cor-nl}{Corollary}
\newtheorem{conjecture}[thm]{Conjecture}
\newtheorem*{conjecture-nl}{Conjecture}
\newtheorem*{quest-nl}{Question}
\newtheorem*{quests-nl}{Questions}
\newtheorem{prop}[thm]{Proposition}
\theoremstyle{remark}
\title{{The extremal Secant Conjecture for curves of arbitrary gonality}}
\author[M. Kemeny]{Michael Kemeny}
\address{Humboldt-Universit\"at zu Berlin, Institut f\"ur Mathematik,  Unter den Linden 6
\hfill \newline\texttt{}
 \indent 10099 Berlin, Germany} \email{{\tt michael.kemeny@gmail.com}}
\begin{document}
\maketitle
\begin{abstract}
We prove the Green--Lazarsfeld Secant Conjecture \cite[Conjecture (3.4)]{green-lazarsfeld-projective} for extremal line bundles on curves of arbitrary gonality, subject to explicit genericity assumptions.
\end{abstract}
\section{Introduction}
Consider a smooth projective curve $C$ of genus $g$ and $L$ a globally generated line bundle of degree $d$. We define the Koszul group $K_{i,j}(C,L)$ as the middle cohomology of 
$$\bigwedge^{i+1}H^0(L) \otimes H^0((j-1)L) \to \bigwedge^{i}H^0(L) \otimes H^0(jL) 
\to \bigwedge^{i-1}H^0(L) \otimes H^0((j+1)L)$$
As is well known, the Koszul groups give the same data as the modules appearing in the minimal free resolution of the $\text{Sym}\;H^0(C,L)$ module $\bigoplus_q H^0(C,qL)$. In the case where $L$ is very ample and the associated embedding is projectively normal, $\bigoplus_q H^0(C,qL)$ is just the homogeneous coordinate ring of the embedded curve $\phi_L: C \hookrightarrow \proj^r$.\\

The pair $(C,L)$ is said to satisfy \emph{property $(N_p)$} if we have the vanishings $$K_{i,j}(C,L)=0 \text{ for} \; i \leq p, \; \; j \geq 2.$$ Then $\phi_L: C \hookrightarrow \proj^r$ is projectively normal if and only if $(C,L)$ satisfies $(N_0)$, whereas the ideal of $C$ is generated by quadrics if, in addition, it satisfies $(N_1)$.\\

A beautiful conjecture of Green--Lazarsfeld gives a necessary and sufficient criterion for $(C,L)$ to satisfy $(N_p)$. To state the conjecture, a line bundle $L$ is called \emph{$p$-very ample} if and only if for every effective divisor $D$ of degree $p+1$ the evaluation map $$ev: H^0(C,L) \to H^0(D,L_{|_D})$$ is surjective. Equivalently, $L$ is \emph{not} $p$-very ample if and only if $C \seq \proj^r$ admits a $(p+1)$-secant $p-1$-plane. We then may state \cite{green-lazarsfeld-projective}: 
\begin{conjecture}[G-L Secant Conjecture] \label{secant-conj}
Let $L$ be a globally generated line bundle of degree $d$ on a curve $C$ of genus $g$ such that
$$d \geq 2g+p+1-2h^1(C,L)-\text{Cliff}(C).$$
Then $(C,L)$ fails property $(N_p)$ if and only if $L$ is not $p+1$-very ample.
\end{conjecture}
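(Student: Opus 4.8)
The plan is to prove the two implications separately. The implication ``$L$ not $(p+1)$-very ample $\Rightarrow$ $(N_p)$ fails'' is the classical, geometric direction, and I would reduce it to an explicit syzygy construction. Given an effective divisor $D$ of degree $p+2$ on which $ev\colon H^0(C,L)\to H^0(D,L_{|_D})$ drops rank — equivalently a $(p+2)$-secant $p$-plane to $\phi_L(C)\seq\proj^r$ — one has $h^0(C,L-D)=h^0(C,L)-(p+1)$, and choosing a basis of $H^0(L)$ adapted to the flag $H^0(L-D)\seq H^0(L)$ one writes down a Koszul cycle in the $\wedge^{p}H^0(L)\otimes H^0(2L)$ term whose image vanishes but which is not a boundary. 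This exhibits a nonzero class in $K_{p,2}(C,L)$, so $(N_p)$ fails; no degree hypothesis enters here.

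The substance of the theorem is the converse vanishing: if $L$ is $(p+1)$-very ample and $d\geq 2g+p+1-2h^1(C,L)-\mathrm{Cliff}(C)$, then $(N_p)$ holds. The first step is to reduce the full property $(N_p)$ to the single \emph{extremal} vanishing $K_{p,2}(C,L)=0$. Under the degree hypothesis the groups $K_{i,j}$ with $i\leq p$ and $j\geq 3$ vanish by a Castelnuovo--Mumford regularity argument for the high tensor powers of $L$, while those with $i<p$ and $j=2$ are controlled by $K_{p,2}$ itself through the Green--Lazarsfeld duality theorem together with the propagation of vanishing within the (dual) linear strand. Thus everything collapses to the corner group $K_{p,2}(C,L)$.

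The second step is to translate this corner vanishing into a cohomological statement on $C$ via the kernel bundle $M_L=\ker\bigl(H^0(L)\otimes\mathcal{O}_C\to L\bigr)$. From the Koszul resolution of $\wedge^{p+1}M_L$ one identifies
\[
K_{p,2}(C,L)\;\cong\;\ker\!\left(H^1(C,\wedge^{p+1}M_L\otimes L)\longrightarrow \wedge^{p+1}H^0(L)\otimes H^1(C,L)\right),
\]
and, since $\det M_L=L^{-1}$ and $\rank M_L=r:=h^0(C,L)-1$, Serre duality rewrites the ambient $H^1$ as $H^0(C,\wedge^{\,r-p-1}M_L\otimes K_C)^{\vee}$, a space of sections of a twist of $\wedge^{\bullet}M_L$ by $K_C$. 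This is exactly where the secant geometry re-enters, since such sections are governed by whether $\phi_L(C)$ admits a $(p+2)$-secant $p$-plane. To force the vanishing I would construct a flat degeneration of $(C,L)$ to a pair $(C_0,L_0)$ with $C_0$ a rational $g$-nodal curve whose $2g$ glued points are placed so as to realise the prescribed Clifford index and to keep $L_0$ $(p+1)$-very ample. On the normalisation $\proj^1$ everything becomes explicit: $M_{L_0}$ and its exterior powers split into line bundles, $K_{p,2}(C_0,L_0)$ acquires a combinatorial description in terms of the node configuration, and the vanishing $K_{p,2}(C_0,L_0)=0$ is checked directly; upper semicontinuity of Koszul cohomology then yields $K_{p,2}(C,L)=0$ for the general member, the explicit genericity assumptions being precisely the conditions under which $(C,L)$ specialises to such a model.

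The main obstacle I anticipate is the degeneration step, in two respects. First, the nodal model $C_0$ must simultaneously have its Clifford index computed by the expected pencil and carry \emph{no} secant plane beyond those forced by that pencil; the correction term $-\mathrm{Cliff}(C)$ in the degree bound is exactly what brings the extremal secants into play, so tracking which secants survive the specialisation is delicate. Second, semicontinuity only delivers the conclusion for a general curve of the given invariants, so obtaining it for \emph{arbitrary} $C$ satisfying the genericity hypotheses requires controlling $\wedge^{p+1}M_L$ uniformly across the whole family rather than on the special fibre alone — and it is these uniform estimates, as opposed to the explicit computation on $\proj^1$, that I expect to be the hardest part.
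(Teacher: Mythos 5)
You are attempting to prove a statement that the paper itself records only as a \emph{conjecture} (Conjecture \ref{secant-conj}); the paper contains no proof of it, and to date none exists. What the paper actually proves are special cases: the extremal case $p=g-k$ for $k$-gonal curves satisfying Aprodu's linear growth condition (Theorem \ref{gonality}), by reduction to the divisorial case of \cite{generic-secant}, using degenerations to nodal curves obtained by attaching rational bridges, the Hirschowitz--Ramanan divisor class computation, and the divisor-theoretic identity $\overline{\mathfrak{Syz}}=\overline{\mathfrak{Sec}}\cup\overline{\mathfrak{Hur}}$ on $\overline{\mathcal{M}}_{2g-2k+3,2(2g-2k+3)}$. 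So the first thing to recognise is that a complete proof of the statement as posed would be a major new result, and your proposal should be read as a strategy sketch, not a proof.

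The decisive gap is in your third step. Semicontinuity of Koszul cohomology only propagates the vanishing $K_{p,2}(C_0,L_0)=0$ from a special fibre to the \emph{general} member of the family; the conjecture, as stated, imposes no genericity whatsoever on $(C,L)$ beyond the degree bound $d\geq 2g+p+1-2h^1(C,L)-\mathrm{Cliff}(C)$ and $(p+1)$-very ampleness. Your closing remark that ``the explicit genericity assumptions'' are those under which $(C,L)$ specialises to the nodal model quietly converts the conjecture into its generic version --- which is essentially the content of \cite{generic-secant}, already known, and strictly weaker than the statement you set out to prove. Moreover, even the generic version is not reachable by the particular degeneration you propose: a rational $g$-nodal curve does not carry a well-defined Clifford index in the required sense, and arranging the $2g$ glued points so that the limit simultaneously computes $\mathrm{Cliff}(C)$, keeps $L_0$ $(p+1)$-very ample, and admits no excess secant planes is precisely the hard open problem; the correction term $-\mathrm{Cliff}(C)$ in the degree bound means the statement is sharp, so there is no slack to absorb degenerate secant behaviour in the limit. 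The first two steps of your proposal (the nonvanishing construction from a $(p+2)$-secant $p$-plane, the reduction to the corner group $K_{p,2}$, and the kernel-bundle reformulation) are standard and correct, but they are also where every known attack starts; the content of the problem lies entirely in the part you have left schematic.
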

It is rather straightforward to see that if $L$ is not $p+1$ very ample, or, equivalently, $L$ admits a $(p+2)$-secant $p$-plane, then $K_{p,2}(C,L)$ is nonzero. The difficulty in establishing the above conjecture is thus to go in the other direction, that is, to construct a secant plane out of a syzygy in $K_{p,2}(C,L)$.\\

In the case $H^1(C,L) \neq 0$, it is well-known that the Secant Conjecture reduces to the Green's Conjecture, which holds for the generic curve in each gonality stratum, \cite{voisin-even}, \cite{voisin-odd}. Thus we will henceforth assume $H^1(C,L)=0$. If $d \geq 2g+p+1$, then $L$ is automatically $p+1$ very ample and further $L$ satisfies property $(N_p)$ by \cite[Thm.\ 4.a.1]{green-koszul}. In particular, we may assume both $\text{Cliff}(C) \geq 1$ and $d \leq 2g+p$. In this case, the line bundle $L$ of degree $d$ fails to be $p+1$ very ample if and only if 
$$L-K_C \in C_{p+2}-C_{2g-d+p},$$
where $C_i \seq \text{Pic}^{i}(C)$ is the image of the $i$-th symmetric product of $C$ under the Abel--Jacobi map (we set $C_0:= \emptyset$).

\medskip
In a joint work with Gavril Farkas, we established the Secant Conjecture for general line bundles on general curves. Moreover, under certain assumptions on the degree, we were able to prove effective versions of the Secant Conjecture. One of our main results was a proof of the conjecture for odd genus curves of maximal Clifford index and line bundles of degree $d=2g$; this is the so-called `divisorial case' of the conjecture. To be precise, we showed:
\begin{thm}[\cite{generic-secant}] \label{general-thm}
Let $C$ be a smooth curve of odd genus $g$ and with a line bundle $L\in \mathrm{Pic}^{2g}(C)$. Then one has the equivalence
$$K_{\frac{g-3}{2},2}(C,L)\neq 0  \ \Leftrightarrow \ \mathrm{Cliff}(C)<\frac{g-1}{2} \  \  \mathrm{or} \  \  \ L-K_C \in C_{\frac{g+1}{2}}-C_{\frac{g-3}{2}}.$$
\end{thm}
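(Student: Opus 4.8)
The plan is to recast everything in terms of the twist $\eta := L - K_C \in \mathrm{Pic}^2(C)$. Since $\deg L = 2g$ we have $h^0(L) = g+1$ and $h^1(L) = 0$, so $L$ embeds $C$ in $\proj^g$; writing $p = \frac{g-3}{2}$, the discussion preceding the theorem identifies the failure of $(p+1)$-very ampleness with the condition $\eta \in W := C_{\frac{g+1}{2}} - C_{\frac{g-3}{2}} \subseteq \mathrm{Pic}^2(C)$, and because $\frac{g+1}{2} + \frac{g-3}{2} = g-1 < g$ this difference variety $W$ is a divisor. As the maximal Clifford index of a curve of odd genus equals $\frac{g-1}{2}$, the theorem splits into two regimes: when $\mathrm{Cliff}(C) < \frac{g-1}{2}$ I must show $K_{p,2}(C,L) \neq 0$ for \emph{every} $\eta$, and when $\mathrm{Cliff}(C) = \frac{g-1}{2}$ I must show that the syzygy locus $\mathrm{Syz}(C) := \{\eta : K_{p,2}(C, K_C \otimes \eta) \neq 0\}$ coincides with $W$. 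The technical engine is the kernel-bundle reformulation: with $M_L = \ker(H^0(L)\otimes\OO_C \to L)$, a bundle of rank $g$ with $\det M_L = L^{-1}$, taking $\wedge^{p+1}$ of the defining sequence, twisting by $L$ and using $H^1(L) = 0$ gives $K_{p,2}(C,L) \cong H^1(C, \wedge^{p+1}M_L \otimes L)$; Serre duality together with $\wedge^{p+1}M_L^\vee \cong \wedge^{g-1-p}M_L \otimes L$ and $g - 1 - p = \frac{g+1}{2}$ then yields $K_{p,2}(C,L) \cong H^0(C, \wedge^{\frac{g+1}{2}}M_L \otimes \omega_C)^{\ast}$. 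This converts the problem into controlling global sections of a fixed-determinant bundle as $\eta$ varies over $\mathrm{Pic}^2(C)$.

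For the two \emph{if} directions the arguments are the expected ones. If $\eta \in W$, the corresponding $(p+2)$-secant $p$-plane produces an explicit nonzero element of $K_{p,2}(C,L)$, exactly as indicated in the introduction. If instead $\mathrm{Cliff}(C) < \frac{g-1}{2}$, I take a line bundle $A$ computing the Clifford index and apply the Green--Lazarsfeld nonvanishing theorem to the decomposition $L = A \otimes (L \otimes A^{-1})$, whose two factors each have at least two sections; the resulting scrollar syzygies give $K_{p,2}(C,L) \neq 0$ for all $\eta$, so that $\mathrm{Syz}(C) = \mathrm{Pic}^2(C)$. Together these establish the inclusion $W \subseteq \mathrm{Syz}(C)$ in the remaining maximal Clifford index case.

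The hard direction is the converse for $\mathrm{Cliff}(C) = \frac{g-1}{2}$: I must construct a secant plane out of a syzygy, i.e. show $\mathrm{Syz}(C) \subseteq W$. My plan follows the Hirschowitz--Ramanan--Voisin philosophy. Using the reformulation, I would present $\mathrm{Syz}(C)$ as a degeneracy locus of a morphism of vector bundles over $\mathrm{Pic}^2(C)$ whose fibrewise cohomology computes $H^0(\wedge^{\frac{g+1}{2}}M_L\otimes\omega_C)$, and argue in three steps: (i) the maximal Clifford index hypothesis rules out syzygies coming from low-degree pencils, so that any nonzero section is forced to be of geometric (secant) origin; (ii) Voisin's solution of the generic Green conjecture supplies a single maximal-Clifford pair with $K_{p,2} = 0$, which by semicontinuity shows $\mathrm{Syz}(C) \neq \mathrm{Pic}^2(C)$; (iii) a class computation---the class of $W$ via Macdonald's formula for a difference of symmetric products, matched against the determinantal class of $\mathrm{Syz}(C)$---combined with the inclusion $W \subseteq \mathrm{Syz}(C)$ and the reducedness of $\mathrm{Syz}(C)$, forces the equality $\mathrm{Syz}(C) = W$.

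The step I expect to be the main obstacle is exactly (iii), together with the passage from an abstract Koszul class to an honest secant divisor. The subtlety is that $\chi(\wedge^{\frac{g+1}{2}}M_L\otimes\omega_C) \neq 0$, so $\mathrm{Syz}(C)$ is \emph{not} a naive generic degeneracy locus of the expected codimension one; proving that it is nonetheless a reduced divisor whose class is the predicted one---and that this class equals $[W]$---is where the special structure of the Koszul differential and the maximal Clifford index hypothesis must enter in an essential way. This is the crux on which the whole equivalence rests, and it is what reduces the general secant statement, in this extremal divisorial range, to the already-known generic Green conjecture.
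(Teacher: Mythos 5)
First, a point of order: this paper does not prove Theorem \ref{general-thm} at all --- it is imported verbatim from \cite{generic-secant}, where it is established by a divisor class computation on the moduli space $\overline{\mathcal{M}}_{g,2g}$ of $2g$-pointed curves; the set-theoretic identity $\overline{\mathfrak{Syz}}=\overline{\mathfrak{Sec}}\cup\overline{\mathfrak{Hur}}$ quoted in Section 2 of the present paper (there applied in genus $2g-2k+3$) is precisely that result. Your kernel-bundle reformulation $K_{\frac{g-3}{2},2}(C,L)^{\vee}\cong H^0(C,\bigwedge^{\frac{g+1}{2}}M_L\otimes\omega_C)$ is correct, the two easy implications are correctly identified, and your three-step plan is the right philosophy. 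The genuine gap is in your step (ii), and it is exactly the reason the actual proof must be globalized over moduli rather than carried out on $\mathrm{Pic}^2(C)$ for a fixed curve. To run a degeneracy-locus/class comparison on $\mathrm{Pic}^2(C)$ you must first know $\mathrm{Syz}(C)\neq\mathrm{Pic}^2(C)$ for \emph{the given} curve $C$ of maximal Clifford index; a single pair $(C_0,L_0)$ with vanishing syzygies plus semicontinuity yields this only for a \emph{generic} such $C$, whereas the theorem is asserted for every smooth curve of odd genus. (Moreover, Voisin's theorem concerns $K_{p,1}(C,K_C)$ and does not directly furnish a degree-$2g$ pair with $K_{\frac{g-3}{2},2}=0$; in \cite{generic-secant} the required vanishing example is a curve on a specific K3 surface, and producing it is a substantial part of that paper.) Passing to $\overline{\mathcal{M}}_{g,2g}$ converts ``nonvanishing of the determinant for one marked curve'' into ``the virtual syzygy divisor is a genuine divisor globally,'' and the class identity there, together with the inclusion $\overline{\mathfrak{Sec}}\cup\overline{\mathfrak{Hur}}\subseteq\overline{\mathfrak{Syz}}$, forces set-theoretic equality at \emph{every} point of the relevant open locus --- which is exactly what the fixed-Jacobian argument cannot deliver.

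Two smaller issues. In the subextremal case $\mathrm{Cliff}(C)<\frac{g-1}{2}$, the Green--Lazarsfeld nonvanishing theorem applied to a decomposition $L=A\otimes(L\otimes A^{-1})$ produces classes in $K_{\ast,1}(C,L)$, i.e.\ a failure of $(N_p)$ at \emph{some} spot, not a nonzero class in $K_{\frac{g-3}{2},2}(C,L)$ at the extremal spot; closing that gap needs either a propagation statement for the second row of the Betti table or, as in \cite{generic-secant}, absorbing this case into the Hurwitz component $\overline{\mathfrak{Hur}}$ of the global divisor identity (which ultimately rests on Hirschowitz--Ramanan and Voisin). Finally, your step (iii) --- computing the virtual class, proving reducedness, and matching it with $[W]$ --- is, as you yourself note, the actual content of the theorem and is left unexecuted; as it stands the proposal is a correct road map with the two hardest bridges (the vanishing example for an arbitrary extremal curve, and the class comparison) missing.
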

The latter condition $ L-K_C \in C_{\frac{g+1}{2}}-C_{\frac{g-3}{2}}$ is equivalent to $L$ failing to be $\frac{g-1}{2}$-very ample.\\

In the case where $C$ is Brill--Noether--Petri general of even genus $g$, we have a similar statement:
\begin{thm}[\cite{generic-secant}]  \label{even}
The Green-Lazarsfeld Conjecture holds for a Brill--Noether--Petri general curve $C$ of even genus and every line bundle $L\in \mathrm{Pic}^{2g+1}(C)$, that is,
$$K_{\frac{g}{2}-1,2}(C,L)\neq 0 \ \Leftrightarrow \ L-K_C\in C_{\frac{g}{2}+1}-C_{\frac{g}{2}-2}.$$
\end{thm}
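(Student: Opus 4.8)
The plan is to separate the two implications, with essentially all of the difficulty in the vanishing direction. For the forward implication, suppose $L-K_C\in C_{\frac{g}{2}+1}-C_{\frac{g}{2}-2}$, so that $L$ fails to be $\frac{g}{2}$-very ample. As the introduction notes, this is the easy direction: a $(\frac{g}{2}+1)$-secant $(\frac{g}{2}-1)$-plane yields a nonzero class in $K_{\frac{g}{2}-1,2}(C,L)$ by the standard secant-to-syzygy construction, and no genericity on $C$ is required. So the real content is the converse: if $L$ is $\frac{g}{2}$-very ample, then $K_{\frac{g}{2}-1,2}(C,L)=0$. I would first record that this is the extremal case of Conjecture \ref{secant-conj}: for a Brill--Noether--Petri general curve of even genus $g$ one has $\text{Cliff}(C)=\frac{g}{2}-1$ and $h^1(C,L)=0$, so $d=2g+1$ saturates the bound $d\geq 2g+p+1-2h^1(C,L)-\text{Cliff}(C)$ with $p=\frac{g}{2}-1$.

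Next I would reformulate the target vanishing into a concrete injectivity statement. Since $\deg L=2g+1$ the embedding is projectively normal with $H^1(C,L)=0$, so Green's duality applies and gives $K_{\frac{g}{2}-1,2}(C,L)^{\vee}\cong K_{\frac{g}{2}+1,0}(C;K_C,L)$. Because $\deg(K_C-L)=-3<0$ the incoming term $H^0(K_C-L)$ vanishes, so this dual group is simply the kernel of the Koszul differential $\wedge^{\frac{g}{2}+1}H^0(L)\otimes H^0(K_C)\to \wedge^{\frac{g}{2}}H^0(L)\otimes H^0(K_C+L)$. Thus $K_{\frac{g}{2}-1,2}(C,L)=0$ is equivalent to injectivity of this single map, a statement about the extremal syzygies of the pair $(C,L)$ that should be amenable to a direct geometric attack, with Voisin's generic even-genus Green's conjecture $K_{\frac{g}{2}-2,2}(C,K_C)=0$ \cite{voisin-even} as the underlying engine controlling extremal canonical syzygies.

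The global architecture of the vanishing proof that I would adopt is a semicontinuity-plus-specialization scheme. Fixing the Brill--Noether--Petri general curve $C$ and letting $L$ range over $\text{Pic}^{2g+1}(C)$, the non-vanishing locus $\mathcal{N}=\{L:K_{\frac{g}{2}-1,2}(C,L)\neq 0\}$ is closed, being a degeneracy locus for the relative Koszul complex, and the forward implication already shows $\mathcal{N}$ contains the secant locus $\mathcal{S}=C_{\frac{g}{2}+1}-C_{\frac{g}{2}-2}$. One must prove the reverse inclusion $\mathcal{N}\seq\mathcal{S}$. I would attempt this by specializing $(C,L)$ to a curve on a polarized K3 surface whose Picard lattice is engineered so that a smooth member has Clifford index exactly $\frac{g}{2}-1$ and $L$ arises, through the surface geometry, as a $\frac{g}{2}$-very ample bundle of degree $2g+1$. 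On such a surface the Koszul cohomology of $(C,L)$ is computed via Lazarsfeld--Mukai bundles and the syzygies of the ambient projective model, and the extremal vanishing can be read off from the lattice data together with the canonical input above; semicontinuity then transports the vanishing back to the generic curve.

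The step I expect to be the main obstacle is not producing one vanishing example but pinning down $\mathcal{N}$ tightly enough to force $\mathcal{N}\seq\mathcal{S}$ for \emph{every} $\frac{g}{2}$-very ample $L$. Since $\text{Pic}^{2g+1}(C)$ is irreducible, a single $L$ with $K_{\frac{g}{2}-1,2}(C,L)=0$ only shows that $\mathcal{N}$ is a proper closed subset, which is far weaker than $\mathcal{N}=\mathcal{S}$. The delicate point is therefore a dimension estimate bounding $\dim\mathcal{N}$ above by $\dim\mathcal{S}$ and verifying that $\mathcal{S}$ exhausts every top-dimensional component, so that the two closed loci coincide. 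Matching the K3 specialization to the exact secant locus $C_{\frac{g}{2}+1}-C_{\frac{g}{2}-2}$, and ensuring the degeneration preserves both the maximal Clifford index and the $\frac{g}{2}$-very ampleness of $L$, is where the substantive work lies; the lower-degree divisorial computation behind Theorem \ref{general-thm} should serve as a template for organizing this estimate.
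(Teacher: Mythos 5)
You should first be aware that Theorem \ref{even} is quoted in this paper from \cite{generic-secant} and is not proved here; the only honest comparison is therefore with the argument in that reference. Measured against it, your overall architecture is the right one: the forward implication via the secant-plane construction, Green's duality to rewrite the vanishing as injectivity of $\bigwedge^{\frac{g}{2}+1}H^0(L)\otimes H^0(K_C)\to \bigwedge^{\frac{g}{2}}H^0(L)\otimes H^0(K_C+L)$, the realization of $\mathcal{N}=\{L: K_{\frac{g}{2}-1,2}(C,L)\neq 0\}$ as a determinantal locus in $\mathrm{Pic}^{2g+1}(C)$, and a K3 specialization to show $\mathcal{N}$ is a proper subset are all genuinely present in \cite{generic-secant}.

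The gap is at the step you yourself identify as the main obstacle, and the tool you propose there cannot close it. Both $\mathcal{N}$ and $\mathcal{S}=C_{\frac{g}{2}+1}-C_{\frac{g}{2}-2}$ are divisors in $\mathrm{Pic}^{2g+1}(C)$: one has $\dim\mathcal{S}=(\frac{g}{2}+1)+(\frac{g}{2}-2)=g-1$, while $\mathcal{N}$ is the degeneracy locus of a morphism between two vector bundles of \emph{equal} rank (this numerical coincidence is precisely what makes $d=2g+1$, $p=\frac{g}{2}-1$ the extremal case, and it needs to be checked), hence is either all of $\mathrm{Pic}^{2g+1}(C)$ or the divisor of a determinant section. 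So once the K3 example shows $\mathcal{N}$ is proper, the bound $\dim\mathcal{N}\leq\dim\mathcal{S}$ is automatic and carries no information: a dimension estimate cannot rule out extra divisorial components of $\mathcal{N}$ disjoint from $\mathcal{S}$. What actually forces $\mathcal{N}=\mathcal{S}$ in \cite{generic-secant} is a comparison of cohomology classes: the class of the determinantal divisor $\mathcal{N}$ is computed from the Chern classes of the two Koszul bundles, the class of the difference variety $\mathcal{S}$ is computed by the Poincar\'e-type formula for $C_a-C_b$ with $a+b=g-1$, and the two coincide; since $\mathcal{S}\subseteq\mathcal{N}$ and both are effective divisors of the same class, their difference is effective of class zero, hence empty. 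Without this class computation (or a substitute of comparable strength) your plan establishes only that $\mathcal{N}$ is a divisor containing $\mathcal{S}$, which is strictly weaker than the theorem.
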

\medskip

The main result of this paper is the an analogue of Theorem \ref{general-thm} in the case of curves of arbitrary gonality, satisfying the \emph{linear growth condition} of Aprodu, \cite{aprodu-remarks}. In this case, $p$ takes on the extremal value $p=g-k$:
\begin{thm}\label{gonality}
Let $C$ be a smooth curve of genus $g$ and gonality $3 \leq k<\lfloor \frac{g}{2} \rfloor+2$. Assume $C$ satisfies the following linear growth condition
$$\dim W^1_{k+n}(C) \leq n \ \ \text{for all} \; \; 0 \leq n \leq g-2k+2 .$$
Then the G-L Secant Conjecture holds for every line bundle $L\in \mathrm{Pic}^{3g-2k+3}(C)$, that is, one has the equivalence
$$K_{g-k,2}(C,L)\neq 0  \ \Leftrightarrow  \  \ L-K_C \in C_{g-k+2}-C_{k-3}.$$
\end{thm}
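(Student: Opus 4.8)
The plan is to establish the two implications of the equivalence separately, with essentially all of the difficulty concentrated in the vanishing direction.

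\emph{The secant direction.} Suppose first that $L-K_C\in C_{g-k+2}-C_{k-3}$. As recorded in the introduction this is equivalent to $L$ failing to be $(g-k+1)$-very ample, hence to the existence of an effective divisor $D$ of degree $g-k+2$ with $h^1(L-D)>0$, i.e.\ of a $(g-k+2)$-secant $(g-k)$-plane $\Lambda$ to $C\subseteq\proj^r$ with $r=2g-2k+3$. From $\Lambda$ one produces a nonzero Koszul class in the standard way: working with the kernel bundle $M_L=\ker(H^0(L)\otimes\OO_C\to L)$, the $g-k+2$ secant points together with the linear forms vanishing on their span assemble into a weight-two cycle which one checks is not a boundary. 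This is the implication the introduction calls straightforward, so I would dispose of it first in order to isolate the real work.

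\emph{Linearizing the vanishing direction.} For the converse I must show $K_{g-k,2}(C,L)=0$ under the hypothesis that $L$ is $(g-k+1)$-very ample. Since $H^1(L)=0$, the standard kernel-bundle identity gives $K_{g-k,2}(C,L)\cong H^1(C,\wedge^{g-k+1}M_L\otimes L)$, and Serre duality together with $\det M_L=L^{-1}$ and $\rank M_L=r$ rewrites this as
$$K_{g-k,2}(C,L)^\vee\cong H^0(C,\wedge^{g-k+2}M_L\otimes K_C).$$
The problem is thereby reduced to proving that this space of sections vanishes; heuristically a nonzero section ought to encode a secant plane, so the entire content is to rule out any spurious section.

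\emph{The gonality pencil and linear growth.} To control $\wedge^{g-k+2}M_L\otimes K_C$ I would fix a pencil $A\in W^1_k(C)$ computing the gonality, use the scrollar sub-line-bundle $A^{-1}\hookrightarrow M_L$ induced by the pencil, and pass to the resulting filtration of $\wedge^\bullet M_L$ to decompose the cohomology into pieces twisted by $K_C$ and multiples of $A$. The linear growth hypothesis $\dim W^1_{k+n}(C)\le n$ for $0\le n\le g-2k+2$ is then brought in exactly as in Aprodu's treatment of Green's conjecture \cite{aprodu-remarks}: it bounds the Brill--Noether loci governing the higher cohomology of these graded pieces, forcing the relevant $H^1$'s to vanish and guaranteeing that $K_{g-k,2}(C,L)$ does not jump as $C$ specializes within its gonality stratum. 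This lets me transport the statement from a curve where it is already known — a general member of the stratum via \cite{generic-secant}, or a model on a rational normal scroll, with Voisin's theorem \cite{voisin-even,voisin-odd} in the background — back to the given $C$ by semicontinuity.

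\emph{The main obstacle.} The real difficulty is that $p=g-k$ is the extremal index: with $\mathrm{Cliff}(C)=k-2$ the degree $d=3g-2k+3$ equals the boundary value $2g+p+1-\mathrm{Cliff}(C)$ of the conjecture, so every dimension estimate is sharp and there is no slack. A nonzero section of $\wedge^{g-k+2}M_L\otimes K_C$ could a priori arise either from a genuine $(g-k+2)$-secant $(g-k)$-plane (the desired conclusion) or from a sub-pencil/scrollar configuration of the same numerical type. The crux is to prove that the second alternative would force $\dim W^1_{k+n}(C)>n$ for some $n\le g-2k+2$, in direct contradiction with linear growth. Separating true secant syzygies from pencil-induced ones precisely at the boundary of the conjectural range is where the argument must be pushed hardest, and it is there that the linear growth condition is used to its full strength.
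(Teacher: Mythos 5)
Your proposal correctly handles the easy (secant $\Rightarrow$ syzygy) direction and correctly reduces the hard direction, via the kernel bundle and Serre duality, to the vanishing of $H^0(C,\bigwedge^{g-k+2}M_L\otimes K_C)$. But from that point on there is a genuine gap: the entire content of the theorem is the step you describe as ``the crux'' and then do not carry out. Worse, the mechanism you propose for it cannot work as stated. Vanishing of Koszul cohomology is an \emph{open} condition: $K_{g-k,2}$ can only jump up under specialization, so knowing the result for a general member of the gonality stratum (or for a scroll model) gives you nothing about the particular curve $C$ you are handed --- semicontinuity transports non-vanishing from special to general, not vanishing from general to special. Moreover $L$ here is an arbitrary line bundle subject only to the secant condition, so you cannot deform $(C,L)$ to a general pair either. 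The assertion that linear growth ``guarantees that $K_{g-k,2}(C,L)$ does not jump'' is exactly the theorem to be proved, restated.

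The paper's actual route is quite different and is worth internalizing. One attaches $g-2k+3$ rational bridges to $C$ at general pairs of points $(x_i,y_i)$ to form a nodal curve $D$ of arithmetic genus $2g-2k+3$, extends $L$ to a balanced bundle $N$ on $D$, and proves (Lemma \ref{good-lem}, via Koszul duality and restriction maps) that $K_{g-k,2}(D,N)=0$ forces $K_{g-k,2}(C,L)=0$. The point of raising the genus is that $[D]$ now lives in $\overline{\mathcal{M}}_{2g-2k+3,2(2g-2k+3)}$, where the divisorial equality $\overline{\mathfrak{Syz}}=\overline{\mathfrak{Sec}}\cup\overline{\mathfrak{Hur}}$ of \cite{generic-secant} applies; so it suffices to show $[D]$ avoids both components. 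Avoidance of $\overline{\mathfrak{Hur}}$ is Aprodu's argument from linear growth via Hirschowitz--Ramanan (this is the only place linear growth enters --- not in any filtration of $M_L$). Avoidance of $\overline{\mathfrak{Sec}}$ is reduced by \cite[Prop.\ 3.6]{farkas-popa-mustata} to $H^0(D,\bigwedge^{g-k}M_{K_D}(2K_D-N))=0$, which is established by a Mayer--Vietoris induction peeling off the rational bridges one at a time, terminating in dimension statements about the secant varieties $V^{2g-3k-j+4}_{2g-3k-j+5}(L)$ on $C$ itself; these are controlled using \cite{aprodu-sernesi} and \cite{FHL} under the hypothesis that $L$ is $(g-k+1)$-very ample. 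None of this machinery --- the bridge construction, the divisor class computation on the bigger moduli space, or the secant-variety dimension estimates --- appears in your outline, and without some substitute for it the proof does not close.
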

The proof is by reducing to the case of Theorem \ref{general-thm}, using arguments similar to those in \cite[\S 6]{generic-secant} and \cite{aprodu-remarks}. Note that $h^1(L)=0$ is automatic for $L\in \mathrm{Pic}^{3g-2k+3}(C)$ as above. The condition $L-K_C \in C_{g-k+2}-C_{k-3}$ is equivalent to the statement that $L$ fails to be $g-k+1$ very ample. Note that for the value $p=g-k$ the set of $L$ which fail to be $p+1$ very ample defines a divisor in the Jacobian; thus this case is of particular interest.

From the main theorem we easily deduce the following statement giving an effective criteria for the vanishing $K_{p,2}(C,L)=0$ for nonspecial line bundles in the case where the inequality in the Secant Conjecture is an equality.
\begin{thm} \label{extremal}
Let $C$ be a smooth curve of genus $g$ and gonality $3 \leq k<\lfloor \frac{g}{2} \rfloor+2$. Assume $C$ satisfies the following linear growth condition
$$\dim W^1_{k+n}(C) \leq n \ \ \text{for all} \; \; 0 \leq n \leq g-2k+2 .$$
Let $L \in \text{Pic}^{2g+p-k+3}(C)$ be nonspecial. If $p>g-k$, then $K_{p,2}(C,L) \neq 0$. On the other hand, assume $p \leq g-k$ and, in addition, we have the two conditions
\begin{align}
&H^1(C,2K_C-L)=0, \\
&\text{The Secant Variety $V^{g-p-4}_{g-p-3}(2K_C-L)$ has expected dimension $g-k-p-1$}.
\end{align}
Then $K_{p,2}(C,L)=0$.
\end{thm}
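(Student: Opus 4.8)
The plan is to handle the nonvanishing range $p>g-k$ and the vanishing range $p\leq g-k$ separately, reducing the second to the divisorial case of Theorem \ref{gonality}. For $p>g-k$ I would show that \emph{every} nonspecial $L$ of degree $2g+p-k+3$ fails to be $(p+1)$-very ample. Since $2g-\deg L+p=k-3$, the introduction shows this failure is equivalent to $L-K_C\in C_{p+2}-C_{k-3}\seq\mathrm{Pic}^{p-k+5}(C)$. By Poincar\'e's formula the classes of $C_{p+2}$ and $C_{k-3}$ are positive multiples of powers of the theta divisor, so their difference locus meets every translate and hence fills $\mathrm{Pic}^{p-k+5}(C)$ as soon as $(p+2)+(k-3)\geq g$, i.e.\ exactly when $p\geq g-k+1$. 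Thus every such $L$ admits a $(p+2)$-secant $p$-plane, and the easy implication of the Secant Conjecture recalled in the introduction gives $K_{p,2}(C,L)\neq 0$.

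Now assume $p\leq g-k$ and set $n:=g-k-p\geq 0$. I would use the Riemann--Roch reformulation of very-ampleness: a nonspecial bundle $M$ of degree $2g+q-k+3$ fails to be $(q+1)$-very ample if and only if there is $E\in C_{k-3}$ with $h^0(2K_C-M-E)\geq g-q-2$, which is the dual form of $M-K_C\in C_{q+2}-C_{k-3}$. Applying this to $M=L+\Gamma$ with $q=g-k$, for $\Gamma$ a general effective divisor of degree $n$ (so $L+\Gamma\in\mathrm{Pic}^{3g-2k+3}(C)$), and using $H^1(C,2K_C-L)=0$ to get $h^0(2K_C-L)=g+k-p-6$, one identifies $V^{g-p-4}_{g-p-3}(2K_C-L)$ with $\{G\in C_{g-p-3}:h^0(2K_C-L-G)\geq k-2\}$, the deficiency-one secant locus of expected dimension $g-k-p-1$. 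The reformulation then reads: $L+\Gamma$ fails to be $(g-k+1)$-very ample precisely when $\Gamma+E\in V^{g-p-4}_{g-p-3}(2K_C-L)$ for some $E\in C_{k-3}$, that is, exactly when $\Gamma$ is a subdivisor of some member of this secant locus.

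Granting that $V^{g-p-4}_{g-p-3}(2K_C-L)$ has its expected dimension $g-k-p-1$, the effective divisors of degree $n$ occurring as subdivisors of its members form a subvariety of dimension at most $g-k-p-1<n$, so a general $\Gamma$ avoids it and $L+\Gamma$ is then $(g-k+1)$-very ample; Theorem \ref{gonality} yields $K_{g-k,2}(C,L+\Gamma)=0$. I would then descend this vanishing to $K_{p,2}(C,L)$ by removing the $n$ points of $\Gamma$ one at a time. With $M_\Lambda=\ker(H^0(\Lambda)\otimes\OO_C\to\Lambda)$ the kernel bundle, for a point $x$ and globally generated $\Lambda$ with $H^1(\Lambda)=0$ one has $0\to M_{\Lambda(-x)}\to M_\Lambda\to\OO_C(-x)\to 0$; taking $\bigwedge^{a}$, tensoring by $\Lambda$, and using $K_{a-1,2}(C,\Lambda)\cong H^1(\bigwedge^{a}M_\Lambda\otimes\Lambda)$ with $H^2=0$ on a curve produces a surjection $K_{a-1,2}(C,\Lambda)\twoheadrightarrow K_{a-2,2}(C,\Lambda(-x))$. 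Iterating $n$ times from $\Lambda=L+\Gamma$ and $a-1=g-k$ gives $K_{p,2}(C,L)=0$, each intermediate $L+\Gamma'$ (with $\Gamma'\leq\Gamma$) being nonspecial as $h^1(L+\Gamma')\leq h^1(L)=0$.

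The main obstacle is the reformulation underlying the third paragraph: establishing rigorously that the expected-dimension hypothesis on $V^{g-p-4}_{g-p-3}(2K_C-L)$ is exactly what forces a general $\Gamma$ to render $L+\Gamma$ very ample. This rests on the residuation dictionary identifying, through $|2K_C-L|$, the deficiency-one secant loci in the complementary degrees $k-3$ and $g-p-3$, and on a careful incidence-dimension count in which the hypothesis $H^1(C,2K_C-L)=0$ is indispensable. A secondary technical point is to ensure that every bundle in the descent is globally generated---automatic once the running degree reaches $2g$, and otherwise to be extracted from the genericity of $\Gamma$---so that the kernel-bundle sequences stay valid at each stage.
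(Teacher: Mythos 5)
Your proposal is correct and follows essentially the same route as the paper: the case $p>g-k$ via the surjectivity of the difference map $C_{p+2}\times C_{k-3}\to \mathrm{Pic}^{p-k+5}(C)$, and the case $p\leq g-k$ by twisting up by a general effective divisor of degree $g-k-p$, using the expected-dimension hypothesis on $V^{g-p-4}_{g-p-3}(2K_C-L)$ to force $(g-k+1)$-very ampleness of $L+\Gamma$, invoking Theorem \ref{gonality}, and projecting syzygies back down. The only difference is that you supply direct arguments (Poincar\'e's formula; the kernel-bundle surjection $K_{a-1,2}(C,\Lambda)\twoheadrightarrow K_{a-2,2}(C,\Lambda(-x))$) for the two steps the paper delegates to \cite{generic-secant}, and both check out.
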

Notice that, if the condition $H^1(C,2K_C-L) \neq 0$, then $L-K_C$ is effective, which obviously implies that $L$ is not $p+1$-very ample. In this case, we already know $K_{p,2}(C,L) \neq 0$, from the easy direction of the G-L Secant conjecture. So the ``interesting'' assumption is really the second one\footnote{In Theorem 1.7 of \cite{generic-secant}, we forgot to explicitly state the assumption $L-K_C$ not effective. When $L-K_C$ is effective, then the expected dimension of $V^{g-p-4}_{g-p-3}(2K_C-L)$ is strictly less than $g-k-p-1$, so this assumption was actually implicit in Theorem 1.7. As explained above, the case $L-K_C$ effective is of no interest, as then $L$ trivially fails to be $p+1$ very ample.}.

In the case $p \leq g-k$, both the conditions of Theorem \ref{extremal} hold for a general line bundle $L \in \text{Pic}^{2g+p-k+3}(C)$. In particular we get, when combining with the results of \cite{generic-secant}:
\begin{cor} \label{cor}
Let $C$ be a general curve of genus $g$ and gonality $k \geq 3$ and let $L \in \text{Pic}^{2g+p-k+3}(C)$ be a general, nonspecial, line bundle. Then the Green--Lazarsfeld Secant conjecture holds for $(C,L)$, i.e.\ $$K_{p,2}(C,L) \neq 0  \ \Leftrightarrow  \  \ Êp>g-k.$$ 
\end{cor}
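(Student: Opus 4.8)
The plan is to deduce the corollary from Theorem~\ref{extremal} whenever $3\le k<\lfloor g/2\rfloor+2$, and from the general-curve results of \cite{generic-secant} in the one remaining maximal-gonality case. The first thing to record is that a general curve of gonality $k$ satisfies the linear growth hypothesis $\dim W^1_{k+n}(C)\le n$ for $0\le n\le g-2k+2$ appearing in Theorems~\ref{gonality} and \ref{extremal}; this is a standard consequence of the Brill--Noether theory of general $k$-gonal curves (it is exactly Aprodu's condition, \cite{aprodu-remarks}), and I would invoke it directly. With the curve-side hypotheses secured, the asserted equivalence separates into the two ranges $p>g-k$ and $p\le g-k$.

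For $p>g-k$ the non-vanishing $K_{p,2}(C,L)\neq 0$ is precisely the first, unconditional, assertion of Theorem~\ref{extremal}, so nothing further is needed in this range. Note that nonspeciality of $L$ is in fact automatic here, since $\deg L=2g+p-k+3>3g-2k+3\ge 2g+1$, the last inequality following from $k\le\lfloor g/2\rfloor+1$.

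For $p\le g-k$ I must check, for a \emph{general} $L\in\mathrm{Pic}^{2g+p-k+3}(C)$, the two genericity hypotheses of Theorem~\ref{extremal}, after which that theorem yields $K_{p,2}(C,L)=0$. The first hypothesis, $H^1(C,2K_C-L)=0$, is a short computation: by Serre duality it is equivalent to $H^0(C,L-K_C)=0$, and $\deg(L-K_C)=p-k+5\le g-1$ precisely because $p\le g-k$ and $k\ge 3$; a general line bundle of degree at most $g-1$ has no sections, so this holds for general $L$. The real content is the second hypothesis, namely that the secant variety $V^{g-p-4}_{g-p-3}(2K_C-L)$ attains its expected dimension $g-k-p-1$. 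Since $L$ is general, $2K_C-L$ is a general line bundle of its degree, so this is a general-position statement, which I would establish by an incidence-correspondence dimension count over $C_{g-p-3}\times\mathrm{Pic}(C)$ (equivalently, by the secant/Brill--Noether dimension estimate for a general line bundle on the general $k$-gonal curve, in the spirit of \cite{generic-secant} and \cite{aprodu-remarks}). This transversality---upgrading the mere emptiness-or-nonemptiness of the secant locus to a statement about its \emph{exact} dimension---is the step I expect to be the main obstacle, as it is the one place where generality of $L$ must be used in an essential way rather than merely to avoid a proper subvariety of the Jacobian.

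It remains to treat the maximal-gonality case excluded by the strict inequality $k<\lfloor g/2\rfloor+2$, which occurs only for odd $g$ with $k=(g+3)/2=\lfloor g/2\rfloor+2$. There $C$ is a general curve in moduli, and the equivalence is an instance of the Secant Conjecture for general line bundles on general curves proved in \cite{generic-secant} (Theorem~\ref{general-thm} being its divisorial $p=g-k$ case, at degree $2g$). Combining the two ranges of $p$ over all gonalities $3\le k\le\lfloor(g+3)/2\rfloor$ then gives the stated equivalence.
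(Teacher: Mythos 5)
Your proposal has the same skeleton as the paper's: for $3\le k<\lfloor g/2\rfloor+2$ reduce to Theorem~\ref{extremal} (the case $p>g-k$ being its unconditional clause), verify the two genericity hypotheses for a general $L$, quote the standard fact that a general $k$-gonal curve satisfies Aprodu's linear growth condition, and dispose of the one remaining maximal-gonality case (odd $g$, $k=(g+3)/2$) by the generic results of \cite{generic-secant}. The verification of $H^1(C,2K_C-L)=0$ via $\deg(L-K_C)\le g-1$ is word-for-word what the paper does. The only point of divergence is the step you flag as the main obstacle, the expected-dimension statement for $V^{g-p-4}_{g-p-3}(2K_C-L)$, which you propose to handle by an incidence-correspondence count but do not carry out. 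The paper instead translates this condition (as in the proof of Theorem~\ref{extremal}) into the non-inclusion of difference varieties $L-K_C+C_{g-k-p}\nsubseteq C_{g-k+2}-C_{k-3}$, then specialises $C$ to a hyperelliptic curve in the closure of the $k$-gonal stratum and invokes \cite[Prop.~2.7]{generic-secant} to show the inclusion would force $L-K_C\in C_{p+2}-C_{k-3}$, a proper closed condition of dimension $p+k-1\le g-1$ that a general $L$ avoids. Your route does close, and more easily than you anticipate: by Riemann--Roch, $D\in V^{g-p-4}_{g-p-3}(2K_C-L)$ exactly when $L-K_C+D$ is effective, so the excess-dimension locus of line bundles is precisely the set of $L$ with $L-K_C+C_{g-k-p}\subseteq C_{g-k+2}-C_{k-3}$; since the translates $L-K_C+C_{g-k-p}$ sweep out the whole Jacobian as $L$ varies while $C_{g-k+2}-C_{k-3}$ has dimension at most $g-1$, this fails for general $L$ on \emph{any} curve, no degeneration needed. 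So your transversality step is softer than a genuine Brill--Noether count; what is essential is only that you first convert the secant condition into the difference-variety condition, which is the content of the equivalence recorded in the proof of Theorem~\ref{extremal}. (One small slip: your inequality $3g-2k+3\ge 2g+1$ fails for $k=\lfloor g/2\rfloor+2$, but nonspeciality in the range $p>g-k$ still holds since $\deg L\ge 2g-1$ there, and it is in any case a hypothesis of the corollary.)
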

\medskip

\noindent {\bf Acknowledgments:} We thank Holger Brenner for an interesting discussion in Osnabr\"uck which lead to this work. I thank Gavril Farkas for sharing many ideas on these topics with me. This work was supported by the DFG Priority Program 1489 \emph{Algorithmische Methoden in Algebra, Geometrie und Zahlentheorie}.

\section{Proof of the Theorem}
Let $C$ be a smooth curve of genus $g$ and gonality $3 \leq k<\lfloor \frac{g}{2} \rfloor+2$; this covers all cases other than $C$ hyperelliptic or $g$ odd and $C$ of maximal gonality.
Assume in addition $C$ satisfies the linear growth condition:
$$\dim W^1_{k+n}(C) \leq n \ \ \text{for all} \; \; 0 \leq n \leq g-2k+2 .$$
Pick $g-2k+3$ \emph{general} pairs of points $(x_i,y_i)$. Let $D$ be the semistable curve obtained by adding $g-2k+3$ smooth, rational components $R_i$ to $C$, each one of which meeting $C$ at the pair $(x_i,y_i)$. The curve $D$ is illustrated in the figure below. It has arithmetic genus $2g-2k+3$.

Let $L$ be a very ample line bundle on $C$ of degree $3g-2k+3$. Write
$$ L= \mathcal{O}_{C}(z_1+\ldots+z_{3g-2k+3})$$
for distinct points $z_1, \ldots, z_{3g-2k+3}$ which avoid all $(x_i,y_i)$. For each $1 \leq i \leq g-2k+3$, choose points $w_i \in R_i$ distinct from $x_i,y_i$. Let $T$ denote the union of the points $z_j$ and $w_i$. Since $T$ avoids all nodes it makes sense to set $N:=\mathcal{O}_D(T)$. Notice that $N$ defines a \emph{balanced} line bundle on the quasi-stable curve $D$, and that $D$ is $4g-4k+6$-general, in the sense of \cite{cap-neron}. In particular, $N$ defines a (stable) point in Caporaso's compactified Jacobian $\displaystyle \overline{P}^{4g-4k+6}(X)$, where $X$ is the stabilisation of $D$, i.e.\ the nodal curve obtained from $C$ by identifying $x_i$ with $y_i$. \\

\begin{figure}
\centering
\includegraphics[scale=0.33]{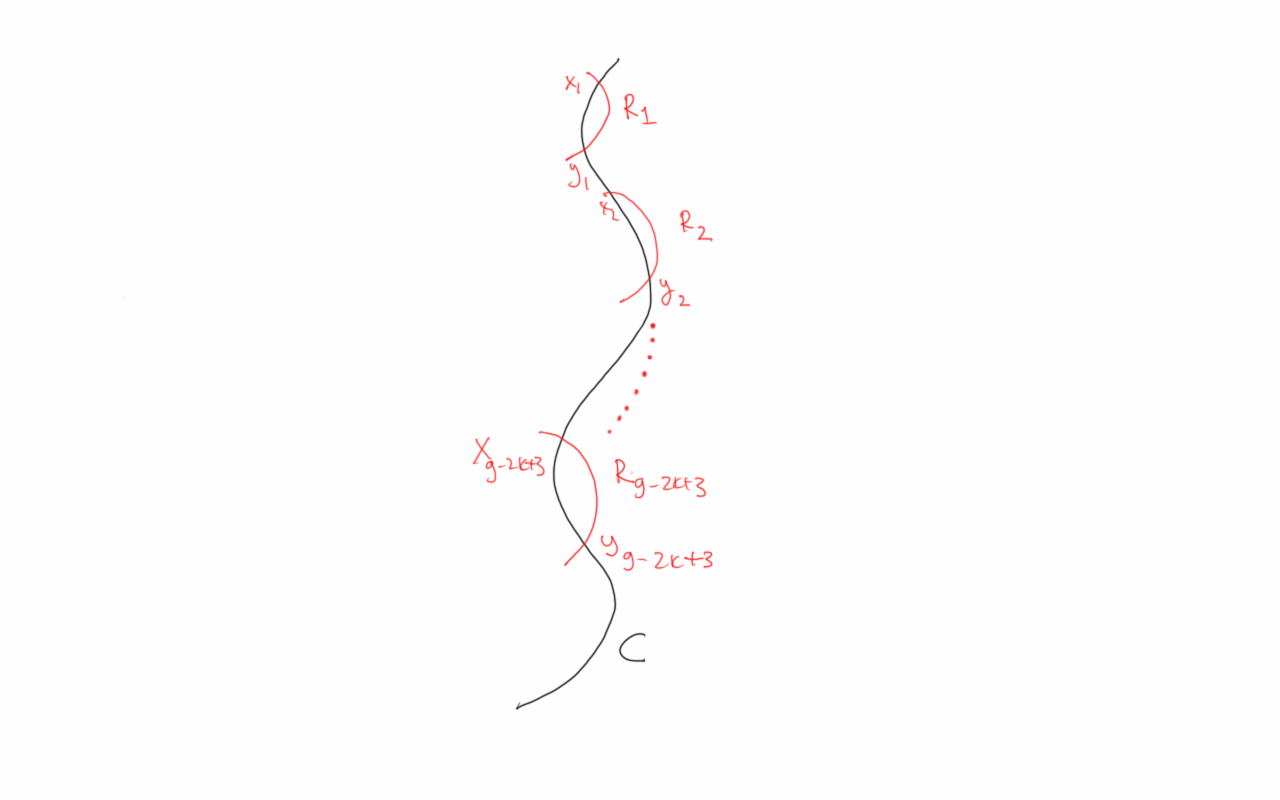}
\caption{The curve $D$}
\end{figure}

The curve $D$ together with the marking $\{z_1, \ldots, z_{3g-2k+3},w_1, \ldots, w_{g-2k+3}\}$ defines a point $[D] \in \overline{\mathcal{M}}_{2g-2k+3, 2(2g-2k+3)}$. Let $\overline{\mathcal{M}}^{va}_{2g-2k+3, 2(2g-2k+3)}$ denote the open locus  of marked stable curves $D'$ such that the marking defines a \emph{very-ample} line bundle $N'$ with $H^1(D',N')=0$, and set
$\mathcal{M}^{va}_{2g-2k+3, 2(2g-2k+3)}:=\overline{\mathcal{M}}^{va}_{2g-2k+3, 2(2g-2k+3)} \cap \mathcal{M}_{2g-2k+3, 2(2g-2k+3)}$. In \cite[Thm.\ 1.6]{generic-secant}, we established the following equality of closed sets
$$\overline{\mathfrak{Syz}}=\overline{\mathfrak{Sec}}\cup \overline{\mathfrak{Hur}}.$$
Here $\overline{\mathfrak{Syz}}$ denotes the closure of the locus $\mathfrak{Syz}$ of smooth, marked curves $\mathfrak{Syz}$ such that the marking defines a very ample line bundle with a certain unexpected syzygy
$$\mathfrak{Syz}:=\Bigl\{[B,x_1, \ldots, x_{2(2g-2k+3)}]\in \cM^{va}_{2g-2k+3,2(2g-2k+3)}: K_{g-k,2}\bigl(B, \OO_B(x_1+\cdots+x_{2(2g-2k+3)})\bigr)\neq 0\Bigr \},$$
whereas $\overline{\mathfrak{Sec}}$ denotes the closure of the locus of smooth, marked curves $\mathfrak{Syz}$ such that the marking defines a line bundle which fails to be $g-k+1$-very ample
$$\mathfrak{Sec}:=\Bigl\{[B,x_1, \ldots, x_{2(2g-2k+3)}]\in \cM_{2g-2k+3,2(2g-2k+3)}: \OO_B(x_1+\cdots+x_{2(2g-2k+3)})\in K_B+B_{g-k+2}-B_{g-k}\Bigr \}.$$
and $\overline{\mathfrak{Hur}}$ is the closure of the \emph{Hurwitz} divisor of curves which are $g-k+2$ gonal.
The following result is due to Aprodu.
\begin{prop}[\cite{aprodu-remarks}] \label{marians-prop}
The marked curve $[D]$ lies outside $\overline{\mathfrak{Hur}} \seq \overline{\mathcal{M}}_{2g-2k+3, 2(2g-2k+3)}$.
\end{prop}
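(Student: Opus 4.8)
The plan is to reduce the claim that $[D]\notin\overline{\mathfrak{Hur}}$ to a statement purely about pencils on the smooth curve $C$, and then to rule out such pencils by a dimension count fed by the linear growth hypothesis. Since neither the markings nor the rational tails $R_i$ affect the gonality of the underlying stable curve, it suffices to show that the stabilization $X$ --- the irreducible nodal curve of arithmetic genus $2g-2k+3$ obtained from $C$ by identifying $x_i$ with $y_i$ --- is not $(g-k+2)$-gonal, i.e.\ that $\mathrm{gon}(X)\ge g-k+3$. As the generic gonality in genus $2g-2k+3$ equals $g-k+3$, this says that $X$ attains the maximal gonality for its genus, which is exactly Aprodu's computation of the gonality of a general nodal degeneration under linear growth.

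First I would make precise what membership in $\overline{\mathfrak{Hur}}$ means for the boundary point $[X]$. By the admissible--cover description of the boundary of the Hurwitz space (Harris--Mumford), $[X]\in\overline{\mathfrak{Hur}}$ forces a limit $g^1_{g-k+2}$ on $X$; since $X$ is irreducible nodal this amounts to a rank-one torsion-free sheaf $\mathcal{F}$ of degree $g-k+2$ with $h^0(\mathcal{F})\ge 2$. Pulling back along the normalization $\nu\colon C\to X$ (after first normalizing at any node where $\mathcal{F}$ fails to be locally free) yields a line bundle $M$ on $C$ of degree $\le g-k+2$ whose sections contain a pencil $V$ such that, at each node where $\mathcal{F}$ is locally free, the two evaluation functionals $s\mapsto s(x_i)$ and $s\mapsto s(y_i)$ on $V$ share a kernel. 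Geometrically, $C$ carries a $g^1_d$ with $d\le g-k+2$ for which $x_i$ and $y_i$ lie in a common fibre, for every index $i$ at which the sheaf is locally free.

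Next I would bound the locus of such pencils. Writing $d=k+(g-2k+2)$, the linear growth condition gives $\dim W^1_{g-k+2}(C)\le g-2k+2$, and hence a bound $\dim G^1_{g-k+2}(C)\le g-2k+2$ once one verifies that non-complete or special pencils contribute no extra dimension --- this is where the full strength of linear growth, rather than just the bound on $W^1$, is needed. Form the incidence variety
$$\Sigma=\Bigl\{\bigl(\mathfrak{g},(x_i),(y_i)\bigr): x_i,y_i \text{ lie in a common fibre of } \mathfrak{g} \text{ for all } i\Bigr\}\seq G^1_{g-k+2}(C)\times C^{g-2k+3}\times C^{g-2k+3}.$$
Over a fixed pencil $\mathfrak{g}$ the points $x_i$ vary freely while each $y_i$ is confined to the finite fibre of $\mathfrak{g}$ through $x_i$, so the fibres of the first projection have dimension $g-2k+3$ and $\dim\Sigma\le (g-2k+2)+(g-2k+3)=2g-4k+5$. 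Since the target of the second projection has dimension $2(g-2k+3)=2g-4k+6$, this projection cannot dominate, so a general choice of the pairs $(x_i,y_i)$ admits no such pencil. The case in which $\mathcal{F}$ is non-locally-free at some nodes only lowers the degree of $M$, hence produces an even more constrained incidence locus and is disposed of the same way.

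The main obstacle I anticipate is twofold: first, passing rigorously from ``$[X]\in\overline{\mathfrak{Hur}}$'' to the existence of the compatible pencil on $C$, which requires the admissible--cover/limit--linear--series description of the Hurwitz boundary together with careful bookkeeping of the torsion-free sheaves at the nodes; and second, upgrading the linear growth bound on $W^1_{g-k+2}(C)$ to the needed bound on the full variety of pencils $G^1_{g-k+2}(C)$, including non-complete pencils arising from lower-degree series with base points. Both points are handled within Aprodu's framework, and the genericity of the pairs $(x_i,y_i)$ is precisely what converts the strict dimension inequality $2g-4k+5<2g-4k+6$ into the desired emptiness, completing the argument that $[D]$ avoids $\overline{\mathfrak{Hur}}$.
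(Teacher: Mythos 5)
Your proposal is correct in outline but takes a more direct route than the paper, which simply invokes Hirschowitz--Ramanan to reduce the claim to the Koszul vanishing $K_{g-k+1,1}(X,\omega_X)=0$ and then cites the proof of Aprodu's Theorem 2 for that vanishing. You bypass the Koszul detour entirely: you characterise membership of the irreducible nodal curve $X$ in $\overline{\mathfrak{Hur}}$ via admissible covers, translate it into the existence of a pencil of degree at most $g-k+2$ on $C$ putting each pair $(x_i,y_i)$ in a common fibre, and kill this with the incidence count $\dim\Sigma\le(g-2k+2)+(g-2k+3)=2g-4k+5<2g-4k+6$. This is in fact the geometric core of Aprodu's own argument (he first shows $[X]$ avoids the gonality stratum exactly as you do, then passes to the Koszul statement via Hirschowitz--Ramanan and Voisin, which the paper translates back into the gonality statement), so for this particular proposition your route is leaner and does not need Voisin's theorem. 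Two points remain genuinely deferred in your write-up. First, the bound $\dim G^1_{g-k+2}(C)\le g-2k+2$ does not follow formally from $\dim W^1_{g-k+2}(C)\le g-2k+2$: incomplete pencils inside a $g^r_d$ contribute a Grassmannian $G(2,r+1)$ of dimension $2(r-1)$ over $W^r_d$, and pencils with base points come from $W^1_{d'}$ with $k\le d'<g-k+2$; controlling these uses the full range $0\le n\le g-2k+2$ of the linear growth hypothesis and is where the real work in Aprodu's proof lies, so this step should not be waved through. Second, a small inaccuracy: when the torsion-free sheaf fails to be locally free at $m$ nodes, the induced pencil drops degree by $m$ but the corresponding $m$ pairs $(x_i,y_i)$ become entirely unconstrained, so the degree drop exactly compensates the lost incidence conditions and the bound on the incidence variety is again $2g-4k+5$, not smaller; the conclusion is unaffected, but the case is not ``even more constrained.''
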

\begin{proof}
Let $X$ be the stabilisation of $D$ as above. By \cite{hir-ram}, it suffices to show $$K_{g-k+1,1}(X,\omega_X)=0,$$
see also \cite[Proposition\ 7]{aprodu-remarks}. This is implied by the linear growth assumption on $C$ and the generality of the points $(x_i, y_i)$, see the proof of \cite[Thm.\ 2]{aprodu-remarks}.
\end{proof}
The following lemma is similar to \cite[Proposition 7]{aprodu-remarks}.
\begin{lem} \label{near-obvious}
Assume $[D] \in \overline{\mathcal{M}}_{2g-2k+3, 2(2g-2k+3)}$ lies outside $$\overline{\mathfrak{Syz}} \seq \overline{\mathcal{M}}_{2g-2k+3, 2(2g-2k+3)}.$$
Then $K_{g-k,2}(D,N)=0$.
\end{lem}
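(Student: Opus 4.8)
The plan is to identify the non-vanishing locus of $K_{g-k,2}$ over $\overline{\mathcal{M}}^{va}_{2g-2k+3,2(2g-2k+3)}$ with $\overline{\mathfrak{Syz}}$, so that the hypothesis $[D]\notin\overline{\mathfrak{Syz}}$ yields the vanishing directly. First I would record that $[D]$ does lie in the open locus $\overline{\mathcal{M}}^{va}$: since $\deg N=4g-4k+6=2p_a(D)$ and $N$ is balanced we have $H^1(D,N)=0$, while very-ampleness of $N$ was already arranged in the construction of $D$. Consequently $h^0(D,qN)$ is locally constant for $q=1,2,3$ in a neighbourhood of $[D]$, and the three-term complex
$$\textstyle\bigwedge^{g-k+1}H^0(N)\otimes H^0(N)\to \bigwedge^{g-k}H^0(N)\otimes H^0(2N)\to \bigwedge^{g-k-1}H^0(N)\otimes H^0(3N)$$
computing $K_{g-k,2}$ globalises to a complex of vector bundles over this neighbourhood.

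By upper semicontinuity of the rank of the middle cohomology of a complex of vector bundles, the locus $Z:=\{[D']\in\overline{\mathcal{M}}^{va}: K_{g-k,2}(D',N')\neq 0\}$ is closed, and by the very definition of $\mathfrak{Syz}$ one has $Z\cap\mathcal{M}^{va}=\mathfrak{Syz}$, whence $\overline{\mathfrak{Syz}}\cap\overline{\mathcal{M}}^{va}\subseteq Z$. The heart of the matter is the reverse inclusion near $[D]$, equivalently that $Z$ carries no component supported in the boundary through $[D]$. Here I would exploit the extremal (divisorial) nature of the syzygy: because $N$ has degree $2p_a(D)$ and $g-k=\tfrac{p_a(D)-3}{2}$, the generic value of $K_{g-k,2}$ vanishes and the group is governed by a morphism between vector bundles of equal rank, so $Z$ is an effective Cartier divisor. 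Comparing with the class computation underlying \cite[Thm.\ 1.6]{generic-secant}, where the degeneracy class of precisely this morphism is identified with $[\overline{\mathfrak{Sec}}]+[\overline{\mathfrak{Hur}}]$ and the set-theoretic equality $\overline{\mathfrak{Syz}}=\overline{\mathfrak{Sec}}\cup\overline{\mathfrak{Hur}}$ is established, the two effective divisors $\overline{\mathfrak{Syz}}\cap\overline{\mathcal{M}}^{va}\subseteq Z$ share the same class and therefore coincide.

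Granting $Z=\overline{\mathfrak{Syz}}\cap\overline{\mathcal{M}}^{va}$, the assumption $[D]\notin\overline{\mathfrak{Syz}}$ together with $[D]\in\overline{\mathcal{M}}^{va}$ forces $[D]\notin Z$, that is $K_{g-k,2}(D,N)=0$. The main obstacle is exactly the boundary analysis of the second paragraph: a priori the closed non-vanishing locus $Z$ could acquire a spurious component along the boundary stratum containing $[D]$, and naive semicontinuity is useless here, since it only gives $\dim K_{g-k,2}(D,N)\geq\dim K_{g-k,2}(B,N_B)$ for a smoothing $B$ and thus controls the wrong direction. It is the equal-rank determinantal structure of the extremal Koszul group, together with the identification of its degeneracy class from \cite{generic-secant}, that rules out such components; note that $[D]\notin\overline{\mathfrak{Hur}}$ is already supplied by Proposition \ref{marians-prop}, so in practice only the secant contribution and the boundary divisors require attention.
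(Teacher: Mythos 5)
Your overall architecture is the same as the paper's: extend the determinantal description of the extremal Koszul locus to the open set $\overline{\mathcal{M}}^{va}_{2g-2k+3,2(2g-2k+3)}$ (checking $H^1(D',N')=0$ so that the relevant complex globalises), observe that this produces an effective divisor restricting to $\mathfrak{Syz}$ on the interior, note that $[D]$ lies only on the boundary divisor $\delta_{irr}$, and reduce to showing the degeneracy divisor contains no component supported on $\delta_{irr}$. Up to that point your proposal is sound and you have correctly located the heart of the matter.

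The gap is in how you dispose of a possible component along $\delta_{irr}$. You argue that the degeneracy class is ``identified with $[\overline{\mathfrak{Sec}}]+[\overline{\mathfrak{Hur}}]$'' by \cite[Thm.\ 1.6]{generic-secant} and that two effective divisors with the same class, one contained in the other, must coincide. But the statement of Theorem 1.6 invoked in this paper is a \emph{set-theoretic} equality $\overline{\mathfrak{Syz}}=\overline{\mathfrak{Sec}}\cup\overline{\mathfrak{Hur}}$ of closures of loci of \emph{smooth} marked curves; it says nothing a priori about the multiplicity of $\delta_{irr}$ in the degeneracy divisor $Z$ over the compactification. The difference $[Z]-[\overline{\mathfrak{Syz}}]$ is exactly the unknown boundary contribution $c\,[\delta_{irr}]$, so asserting that the two classes agree is equivalent to the claim $c=0$ that you are trying to prove: the argument is circular unless you independently compute the $\delta_{irr}$-coefficient of $Z$. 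The paper avoids this by a direct geometric argument: it suffices to exhibit a single point of $\delta_{irr}\cap\overline{\mathcal{M}}^{va}$ not in the degeneracy locus, and such a point is produced from \cite[Thm.\ 1.8]{generic-secant} as an integral nodal curve in the linear system $|L|$ on the K3 surface $Z_{2g-2k+3}$, obtained by degenerating to the hyperelliptic K3 surface $\hat{Z}_{2g-2k+3}$ and smoothing a transverse union $A+B$ with $A\in|L-E|$ and $B\in|E|$. To repair your proof you should replace the class comparison with such an explicit vanishing statement at a point of $\delta_{irr}$ (or else genuinely compute the boundary coefficient of the degeneracy class, which is substantially more work).
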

\begin{proof}
The only reason this lemma is not totally obvious is that $\mathfrak{Syz}$ was defined as the closure of \emph{smooth}, marked curves with extra-syzygies. However, the determinantal description from \cite[\S 6]{generic-secant} can be extended verbatim to the open locus $\overline{\mathcal{M}}^{va}_{2g-2k+3, 2(2g-2k+3)}$ of marked stable curves $D'$ such that the marking defines a very-ample line bundle $N'$ with $H^1(D', N')=0$; see also \cite[\S 2]{farkas-syzygies}, \cite{farkas-koszul}. Indeed, the only thing which needs checking is that we continue to have $H^1(D', \bigwedge^{g-k}M_{N'} \otimes N'^{2})$. This follows from the short exact sequence
$$0 \to \bigwedge^{g-k+1}M_{N'} \otimes N' \to \bigwedge^{g-k+1}H^0(N') \otimes N' \to  \bigwedge^{g-k}M_{N'} \otimes N'^{2} \to 0,$$
and the assumption $H^1(D', N')=0$.

Thus we get a \emph{divisor} $\mathfrak{Syz}^{va} \seq \overline{\mathcal{M}}^{va}_{2g-2k+3, 2(2g-2k+3)}$ which coincides with $\mathfrak{Syz}$ on $\mathcal{M}^{va}_{2g-2k+3, 2(2g-2k+3)}$. Now, the fact that $L$ is very ample implies that $N$ is very ample; indeed, $H^0(C,L) \simeq H^0(D,N)$, and $\phi_N: D \to \proj^{g-2k+3}$ embeds $D$ as the union of the curve $C$ (embedded by $L$) together with $g-2k+3$ secant lines $R_i$. So $[D] \in \overline{\mathcal{M}}^{va}_{2g-2k+3, 2(2g-2k+3)}$. Riemann--Roch now gives $H^1(D,N')=0$. The point $[D]$ lies on precisely one boundary component of $\overline{\mathcal{M}}_{2g-2k+3, 2(2g-2k+3)}$, namely the component $\delta_{irr}$ whose general point is an integral curve with one node; see \cite{arb-corn-calculating} for details on the boundary of $\overline{\mathcal{M}}_{g,n}$.

Thus, it suffices to show that $\mathfrak{Syz}^{va}$ does not contain $\delta_{irr}$. This follows easily from \cite[Thm.\ 1.8]{generic-secant}. Indeed, it suffices to show that there exist integral, singular curves with nodal singularities in the linear system $|L|$ on the K3 surface $Z_{2g-2k+3}$ from Section 3 of \emph{loc.\ cit.\ }\footnote{There is a typo in the statement of \cite[Thm.\ 1.8]{generic-secant}, namely we should have $(C)^2=4i$. This typo is not repeated in \cite[\S 3]{generic-secant}.} For this, one can degenerate to the hyperelliptic K3 surface $\hat{Z}_{2g-2k+3}$ as in \cite[\S 3]{generic-secant}, and take a general curve $A$ in the base point free linear system $|L-E|$ which meets a given elliptic curve $B \in |E|$ transversally. The nodal curve $A+B$ then deforms to an integral nodal curve in $|L|$.

\end{proof}
We next compare difference varieties with secant varieties, see \cite[VIII.4]{ACGH}, \cite[\S 2]{generic-secant} for background.
\begin{lem} \label{lem1}
For any $0 \leq j \leq g-2k+3$, the inclusion
$$L-C_{2(g-2k+3-j)}-K_C \seq C_{k-1+j}-C_{g-k-j}, $$
of closed subvarieties of $\text{Pic}^{2k+2j-g-1}(C) $ implies that the following dimension estimate holds
$$ \dim V^{2g-3k-j+4}_{2g-3k-j+5}(L) \geq 2g-2j-4k+6.$$
\end{lem}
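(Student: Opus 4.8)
The plan is to convert the hypothesised inclusion of difference varieties into an explicit family of secant divisors, exploiting the dictionary between difference and secant varieties recalled from \cite[VIII.4]{ACGH} and \cite[\S 2]{generic-secant}. First I would unpack the inclusion set-theoretically: writing $m := g-2k+3$, the containment $L - C_{2(m-j)} - K_C \seq C_{k-1+j}-C_{g-k-j}$ asserts that for \emph{every} effective divisor $F$ of degree $2(m-j)$ there exist effective divisors $A$ of degree $k-1+j$ and $B$ of degree $g-k-j$ with $L-K_C-F \sim A-B$ in $\text{Pic}^{2k+2j-g-1}(C)$.

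For each such $F$ I would produce a point of the secant variety as follows. Set $D := A+F$, an effective divisor of degree $(k-1+j)+2(m-j) = 2g-3k-j+5$, which is exactly the lower index of the secant variety in question. Rearranging $L-K_C-F \sim A-B$ gives $L-D \sim K_C - B$, so by Serre duality and Riemann--Roch on $B$ (which has degree $g-k-j$) one finds $h^0(L-D) = h^0(K_C-B) = h^1(B) = h^0(B)+k+j-1 \geq k+j$. Since $h^1(L)=0$ forces $h^0(L) = 2g-2k+4$, the divisor $D$ imposes at most $(2g-2k+4)-(k+j) = 2g-3k-j+4$ conditions on $|L|$; in the convention of \cite[\S 2]{generic-secant} this is precisely the statement $D \in V^{2g-3k-j+4}_{2g-3k-j+5}(L)$.

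To upgrade this pointwise construction to the asserted dimension bound I would run an incidence argument. Consider
\[ I := \{\, (F,D) : F \leq D,\ D \in V^{2g-3k-j+4}_{2g-3k-j+5}(L) \,\} \seq \mathrm{Sym}^{2(m-j)}C \times \mathrm{Sym}^{2g-3k-j+5}C. \]
By the previous paragraph the projection $I \to \mathrm{Sym}^{2(m-j)}C$ is surjective, so $\dim I \geq 2(m-j) = 2g-2j-4k+6$. The second projection maps $I$ into the secant variety, and its fibre over a divisor $D$ consists of the degree-$2(m-j)$ sub-divisors of the fixed effective divisor $D$, of which there are only finitely many; hence this projection is finite onto its image. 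Therefore $\dim V^{2g-3k-j+4}_{2g-3k-j+5}(L) \geq \dim I \geq 2g-2j-4k+6$, as claimed.

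The genuinely delicate points, and where I would be most careful, are twofold: matching the indexing conventions so that ``$D$ imposes at most $2g-3k-j+4$ conditions'' translates into membership in $V^{2g-3k-j+4}_{2g-3k-j+5}(L)$, and ensuring that the produced family of secant divisors does not collapse in dimension. The latter is exactly what the finite-fibre observation secures, and it relies on choosing honest effective representatives $A$ and $F$ so that $F \leq D := A+F$ holds as a relation of divisors and not merely of classes in $\text{Pic}(C)$. Beyond this bookkeeping the argument is entirely formal, which is why working over the symmetric products $\mathrm{Sym}^{\bullet}C$ (rather than their images $C_{\bullet}\seq\text{Pic}(C)$) is essential: it is what makes $\dim \mathrm{Sym}^{2(m-j)}C = 2(m-j)$ and hence delivers the stated lower bound.
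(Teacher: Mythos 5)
Your argument is correct and follows essentially the same route as the paper: unpack the inclusion to get, for each effective $F$ of degree $2(g-2k+3-j)$, a secant divisor $D=A+F$ with $h^0(L-D)\geq k+j$, and then use finiteness of the set of degree-$2(g-2k+3-j)$ subdivisors of a fixed $D$ to conclude the dimension bound. The incidence-correspondence packaging and the explicit Riemann--Roch computation are just slightly more formal versions of the steps in the paper's proof.
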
 
\medskip
Note that the expected dimension of the secant variety $V^{2g-3k-j+4}_{2g-3k-j+5}(L)$ is $2g-2j-4k+5$, so the inclusion above implies that the secant variety has dimension higher than expected.
\begin{proof}
From the inclusion $L-C_{2(g-2k+3-j)}-K_C \seq C_{k-1+j}-C_{g-k-j} $, we have that, for every effective divisor $D$ of degree $2(g-2k+3-j)$, there exists an effective divisor $E$ of degree $k-1+j$ such that
$$ [L-(D+E)] \in K_C-C_{g-k-j}.$$
As $h^0(C,K_C)=g$, this implies that $L-(D+E)$ has at least $k+j$ sections. This is equivalent to $$[D+E] \in  V^{2g-3k-j+4}_{2g-3k-j+5}(L).$$ Let $C^{(i)}$ denote the $i$-th symmetric product of $C$. There are only finitely many possible $D' \in C^{(2(g-2k+3-j))}$ such that we have the equality of divisors $$[D+E]=[D'+E'] \in C^{(2g-3k-j+5)}$$ for some effective divisor $E'$ of degree $k-1+j$. Hence the dimension of $V^{2g-3k-j+4}_{2g-3k-j+5}(L)$ is at least $2(g-2k+3-j)$.
\end{proof}
We now apply \cite[Remark 4.2]{aprodu-sernesi} to show that if $L$ as above is $g-k+1$ very ample, then none of the secant loci from the previous lemma can have excess dimension.  
\begin{lem} \label{lem2}
Assume $L$ as above is $g-k+1$ very ample. Then $$\dim V^{2g-3k-j+4}_{2g-3k-j+5}(L) = 2g-2j-4k+5$$ for all $0 \leq j < g-2k+3$, whereas $V^{g-k+1}_{g-k+2}(L)=\emptyset$.
\end{lem}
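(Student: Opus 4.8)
The plan is to handle the two assertions by separate arguments, unifying them by noting that $V^{2g-3k-j+4}_{2g-3k-j+5}(L)$ is the member at parameter $j$ of a single family, whose expected dimension $2g-2j-4k+5$ equals $-1$ (i.e.\ predicts emptiness) exactly at the extremal value $j=g-2k+3$, where the superscript and subscript become $g-k+1$ and $g-k+2$. For this endpoint I would argue directly: a Riemann--Roch computation shows that $F \in V^{g-k+1}_{g-k+2}(L)$ if and only if $h^1(L-F)\geq 1$, that is, if and only if $K_C-L+F$ is effective of degree $k-3$. Hence $V^{g-k+1}_{g-k+2}(L)\neq\emptyset$ if and only if $L-K_C \in C_{g-k+2}-C_{k-3}$, which by the criterion recalled in the introduction is precisely the failure of $g-k+1$-very ampleness. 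The hypothesis therefore gives the emptiness at once.

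For $0 \leq j < g-2k+3$ the expected dimension is strictly positive, so the existence theorem for secant loci (see \cite[VIII.4]{ACGH}) makes $V^{2g-3k-j+4}_{2g-3k-j+5}(L)$ nonempty, and, being determinantal, each of its components has dimension at least $2g-2j-4k+5$. The real content is the opposite inequality. The same Riemann--Roch bookkeeping identifies this locus with the union $\bigcup_{G \in C^{(g-k-j)}} |(L-K_C)+G|$, and a dimension count shows that it acquires excess dimension precisely when $(L-K_C)+G$ is special for the generic $G$ of degree $g-k-j$ --- equivalently, by Serre duality, when the generic such $G$ is a subdivisor of a member of $|2K_C-L|$, i.e.\ when the corresponding secant locus of $2K_C-L$ is itself excessive. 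This explains, incidentally, why the dual secant varieties of $2K_C-L$ reappear in Theorem \ref{extremal}.

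To exclude this I would apply \cite[Remark 4.2]{aprodu-sernesi}, which bounds the dimension of exactly these secant loci and shows that excess dimension forces an inclusion of difference varieties of the shape met in Lemma \ref{lem1}. Reading the implication of Lemma \ref{lem1} in reverse, excess of $V^{2g-3k-j+4}_{2g-3k-j+5}(L)$ would produce a family of secant planes which, after specialising the moving points, yields a divisor witnessing $L-K_C \in C_{g-k+2}-C_{k-3}$, contradicting $g-k+1$-very ampleness; together with the endpoint emptiness this pins every member of the family to its expected dimension. I expect the main obstacle to be precisely this conversion: turning the numerical excess at an arbitrary interior parameter $j$ into an honest geometric witness, and propagating it to the single forbidden membership $L-K_C \in C_{g-k+2}-C_{k-3}$ rather than to the weaker condition $L-K_C \in C_{2g-3k-j+5}-C_{g-k-j}$ that the bare inclusion of difference varieties supplies. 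This is where the precise form of \cite[Remark 4.2]{aprodu-sernesi}, in tandem with the linear growth hypothesis bounding the loci $W^1_d(C)$ that would otherwise support such a moving family, does the decisive work.
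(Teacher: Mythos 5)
Your framework is right up to a point: the endpoint $j=g-2k+3$ is indeed dispatched by the Riemann--Roch translation of $g-k+1$-very ampleness into $V^{g-k+1}_{g-k+2}(L)=\emptyset$, nonemptiness and the lower bound for interior $j$ come from \cite[pg.\ 356]{ACGH} and the determinantal structure, and \cite[Remark 4.2]{aprodu-sernesi} is the correct propagation tool. But you have left unresolved exactly the step that carries the proof, and you have misidentified what closes it. The paper uses \cite[Remark 4.2]{aprodu-sernesi} only to propagate excess at an arbitrary interior $j$ to the single statement $\dim V^{g-k+2}_{g-k+3}(L)\geq 2$ (the locus at $j=g-2k+2$, whose expected dimension is $1$); it does \emph{not} read Lemma \ref{lem1} in reverse, nor extract an inclusion of difference varieties, nor invoke the linear growth hypothesis --- that hypothesis plays no role in this lemma at all (it is used only in Proposition \ref{marians-prop} to avoid the Hurwitz divisor).

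The decisive conversion from $\dim V^{g-k+2}_{g-k+3}(L)\geq 2$ to the contradiction $V^{g-k+1}_{g-k+2}(L)\neq\emptyset$ is a two-step argument you do not supply. First, one shows the Abel--Jacobi map $\pi: V^{g-k+2}_{g-k+3}(L)\to \mathrm{Pic}^{g-k+3}(C)$ is finite: a positive-dimensional fibre would be a pencil of divisors $D_t$ with $K_C-L+D_t$ a fixed effective line bundle, and subtracting a point $s$ in the support of a member of $|K_C-L+D_t|$ from a suitable $D_{t'}$ containing $s$ would already produce an element of $V^{g-k+1}_{g-k+2}(L)$. Second, since the locus has dimension at least $2$, for any $p\in C$ one finds an irreducible closed curve $S$ of divisors in $V^{g-k+2}_{g-k+3}(L)$ all containing $p$; finiteness of $\pi$ guarantees that $[D]\mapsto K_C-L+D$ maps $S$ to an honest curve of effective line bundles of degree $k-2$, and the Fulton--Harris--Lazarsfeld theorem on excess linear series \cite{FHL} then yields some $s\in S$ with $K_C-L+D_s-p$ effective, i.e.\ $D_s-p\in V^{g-k+1}_{g-k+2}(L)$, the desired contradiction. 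Without the finiteness claim and the appeal to \cite{FHL}, the ``main obstacle'' you flag remains a genuine gap.
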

\begin{proof}
Firstly note that, if $0 \leq j < g-2k+3$, then all secant loci $V^{2g-3k-j+4}_{2g-3k-j+5}(L)$ are nonempty by \cite[pg.\ 356]{ACGH}. For $j=g-2k+3$ the secant locus $V^{g-k+1}_{g-k+2}(L)=\emptyset$ by the assumption that $L$ is $g-k+1$ very ample. Suppose there exists $0 \leq j < g-2k+3$ with $\dim V^{2g-3k-j+4}_{2g-3k-j+5}(L) > 2g-2j-4k+5$. By \cite[Remark 4.2]{aprodu-sernesi}, $\dim V^{g-k+2}_{g-k+3}(L)\geq 2$. 

Consider the Abel--Jacobi map $\pi \; : V^{g-k+2}_{g-k+3}(L) \to \text{Pic}^{g-k+3}(C)$. We claim that $\pi$ is finite. Indeed, otherwise we would have a one-dimensional family of 
$[D_t] \in V^{g-k+2}_{g-k+3}(L)$ with $\mathcal{O}_{C}(D_t)$ constant. Then the line bundle $K_C-L+D_t$ is independent of $t$, and furthermore it is effective, since $[D_t] \in V^{g-k+2}_{g-k+3}(L)$. Let $Z \in |K_C-L+D_t|$ and $s \in \text{Supp}(Z)$; the assumption $k \geq 3$ ensures $\deg(K_C-L+D_t) \geq 1$. There exists some $t'$ such that $s \in \text{Supp}(D_{t'})$ and let $D':=D_{t'}-s$. Then $Z-s \in |K_C-L+D'|$, so $K_C-L+D'$ is effective, and $D' \in V^{g-k+1}_{g-k+2}(L)$, contradicting that $V^{g-k+1}_{g-k+2}(L)=\emptyset$. Thus we have that $\pi$ is finite. 

We will now apply \cite{FHL} to see that $V^{g-k+1}_{g-k+2}(L)\neq \emptyset$ (cf.\ \cite[Remark 4.4]{aprodu-sernesi} and the proof of \cite[Thm.\ 1.5]{generic-secant}). This contradiction will finish the proof. Indeed, for any point $p \in C$, we can find an irreducible, closed curve $S \seq V^{g-k+2}_{g-k+3}(L)$ such that, for all $s \in S$, the corresponding divisor $[D_s] \in V^{g-k+2}_{g-k+3}(L)$ passes through $p$, so that $D'_s:=D_s-p$ is an effective divisor. Now consider the Abel--Jacobi map
\begin{align*}
p \; : V^{g-k+2}_{g-k+3}(L) &\to \text{Pic}^{k-2}(C) \\
[D] & \mapsto K_C-L+D
\end{align*}
This image $p(S)$ is a closed curve, each point of which parametrises an effective line bundle. By \cite{FHL}, there exists an $s \in S$ with $K_C-L+D_s-p=K_C-L+D'_s$ effective. But this is the same as saying $D'_s \in V^{g-k+1}_{g-k+2}(L)$.
\end{proof}
We now record a lemma which we will shall need for the proof of the main theorem. 
\begin{lem} \label{good-lem}
Let $N$ be the balanced line bundle of degree $4g-4k+6$ as above and assume $$K_{g-k,2}(D,N)=0.$$ Then 
$$K_{g-k,2}(C, L)=0.$$
\end{lem}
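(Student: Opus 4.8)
The plan is to compare the two Koszul groups through the Mayer--Vietoris sequence attached to the decomposition $D = C \cup R$, where $R := R_1 \cup \cdots \cup R_{g-2k+3}$ is the disjoint union of the rational bridges and $\Sigma := C \cap R$ is the set of nodes. The starting point is the kernel-bundle description already exploited in the proof of Lemma \ref{near-obvious}: from the exact sequence displayed there, together with $H^1(D,N)=0$ and $H^1(C,L)=0$, the same computation identifies
$$K_{g-k,2}(D,N) \cong H^1\bigl(D, \bigwedge^{g-k+1} M_N \otimes N\bigr), \qquad K_{g-k,2}(C,L) \cong H^1\bigl(C, \bigwedge^{g-k+1} M_L \otimes L\bigr),$$
where $M_N$ and $M_L$ denote the kernels of the two evaluation maps. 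It thus suffices to construct a surjection from the first group onto the second.

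To do so I would first describe the restriction of $M_N$ to the components of $D$. Since $N|_C = L$ and $H^0(D,N) \to H^0(C,L)$ is an isomorphism, restricting $0 \to M_N \to H^0(D,N) \otimes \mathcal{O}_D \to N \to 0$ to $C$ remains exact (as $N$ is locally free, $\mathrm{Tor}_1^{\mathcal{O}_D}(N,\mathcal{O}_C)=0$) and gives $M_N|_C \cong M_L$, hence $\bigl(\bigwedge^{g-k+1} M_N \otimes N\bigr)|_C \cong \bigwedge^{g-k+1} M_L \otimes L$. On a bridge $R_i \cong \proj^1$ one has $N|_{R_i} = \mathcal{O}_{\proj^1}(1)$; the subspace $V \seq H^0(D,N)$ of sections vanishing along $R_i$ has dimension $h^0(N)-2$ (the two nodes $x_i,y_i$ imposing independent conditions) and yields a trivial subbundle $V \otimes \mathcal{O}_{R_i} \seq M_N|_{R_i}$ whose quotient is the Euler kernel $\mathcal{O}_{\proj^1}(-1)$, so that
$$M_N|_{R_i} \cong \mathcal{O}_{\proj^1}(-1) \oplus \mathcal{O}_{\proj^1}^{\oplus(h^0(N)-2)}.$$
Consequently every summand of $\bigl(\bigwedge^{g-k+1} M_N \otimes N\bigr)|_{R_i}$ has degree $0$ or $1$, whence $H^1\bigl(R_i, \bigl(\bigwedge^{g-k+1} M_N \otimes N\bigr)|_{R_i}\bigr)=0$.

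Writing $\mathcal{F} := \bigwedge^{g-k+1} M_N \otimes N$, I would then invoke the Mayer--Vietoris sequence
$$0 \to \mathcal{F} \to \mathcal{F}|_C \oplus \mathcal{F}|_R \to \mathcal{F}|_\Sigma \to 0.$$
As $\mathcal{F}|_\Sigma$ is a skyscraper and $H^1(R,\mathcal{F}|_R) = \bigoplus_i H^1(R_i, \mathcal{F}|_{R_i}) = 0$, the associated long exact cohomology sequence produces a surjection $H^1(D,\mathcal{F}) \twoheadrightarrow H^1(C,\mathcal{F}|_C)$, i.e.\ a surjection $K_{g-k,2}(D,N) \twoheadrightarrow K_{g-k,2}(C,L)$ under the identifications above. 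The assumption $K_{g-k,2}(D,N)=0$ then immediately forces $K_{g-k,2}(C,L)=0$.

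The Koszul-to-$H^1$ dictionary and the Mayer--Vietoris bookkeeping are routine, so the main obstacle is the pair of restriction computations. The isomorphism $M_N|_C \cong M_L$ rests on $H^0(D,N) \xrightarrow{\sim} H^0(C,L)$ and the flatness of the line bundle $N$, while the precise splitting of each $M_N|_{R_i}$ --- in particular the absence of any summand of degree below $-1$ --- is exactly what makes the rational bridges contribute no first cohomology, and hence upgrades the Mayer--Vietoris connecting map into the clean surjection we need. It is here that the very-ampleness of $N$ (equivalently, that the $R_i$ are embedded as genuine secant lines) is used in an essential way.
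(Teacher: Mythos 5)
Your argument is correct, but it takes a genuinely different route from the paper's. The paper first applies Koszul duality, replacing $K_{g-k,2}(C,L)$ and $K_{g-k,2}(D,N)$ by the weight-zero groups $K_{g-k+2,0}(C,\omega_C;L)$ and $K_{g-k+2,0}(D,\omega_D;N)$, and then compares the two Koszul differentials directly: restriction gives injections $H^0(D,N)\hookrightarrow H^0(C,L)$ and $H^0(D,\omega_D)\hookrightarrow H^0(C,\omega_C(\sum_i x_i+y_i))$ whose image contains $H^0(C,\omega_C)$, and a commutative diagram shows that a nonzero element of $\ker d_{g-k+2,0}$ for $(C,\omega_C;L)$ produces one for $(D,\omega_D;N)$; the statement is proved in contrapositive form. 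You instead stay in weight two, identify both Koszul groups with $H^1$ of $\bigwedge^{g-k+1}M\otimes(\text{line bundle})$ using $H^1(D,N)=H^1(C,L)=0$, and extract a surjection $K_{g-k,2}(D,N)\twoheadrightarrow K_{g-k,2}(C,L)$ from the Mayer--Vietoris sequence for $D=C\cup R$. Your route avoids invoking Koszul duality on the nodal curve $D$ (which the paper justifies via \cite[Thm.\ 2.24]{aprodu-nagel}) and in fact yields slightly more, namely a surjection of Koszul groups rather than just an implication between vanishings; the key restriction computations $M_N|_C\cong M_L$ (from $H^0(D,N)\xrightarrow{\sim}H^0(C,L)$ and local freeness of $N$) and $M_N|_{R_i}\cong\mathcal{O}_{\proj^1}(-1)\oplus\mathcal{O}_{\proj^1}^{\oplus(h^0(N)-2)}$ are both correct. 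One small remark: for the direction you actually need, the splitting of $M_N|_{R_i}$ is not essential --- the Mayer--Vietoris long exact sequence already terminates at $H^1(\mathcal{F}|_\Sigma)=0$, so $H^1(D,\mathcal{F})$ surjects onto $H^1(C,\mathcal{F}|_C)\oplus H^1(R,\mathcal{F}|_R)$ and in particular onto the first factor regardless of the value of $H^1(R,\mathcal{F}|_R)$; that vanishing would only be needed if you wanted the surjection to be close to an isomorphism.
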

\begin{proof}
Assume $K_{g-k,2}(C,L) \neq 0.$ Then $K_{g-k+2,0}(C,\omega_C;L) \neq 0$ by Koszul duality, \cite{green-koszul}. Likewise $K_{g-k,2}(D,N)=0$
if and only if $K_{g-k+2,0}(D,\omega_D;N)$. Note that $H^0(D,N) \simeq H^0(C,L)$, and the proof of Koszul duality using kernel bundles goes through unchanged in our case, even though $D$ is nodal, see \cite[Thm.\ 2.24]{aprodu-nagel}.
Restriction induces natural inclusions
\begin{align*}
 H^0(D,N) &\hookrightarrow H^0(C,L)\\
 H^0(D,\omega_D) &\hookrightarrow H^0(C,\omega_C(\sum_{i=1}^{g-2k+3} x_i+y_i ))\\
 H^0(D,N \otimes \omega_D) & \hookrightarrow H^0(C,L \otimes \omega_C(\sum_{i=1}^{g-2k+3} x_i+y_i ))
 \end{align*}
We thus get the following commutative diagram, where both vertical arrows are injective:
$$
\begin{CD}
{\bigwedge^{g-k+2} H^0(D,N)\otimes H^0(D,\omega_D)} @>{d_{g-k+2,0}}>>{\bigwedge^{g-k+1} H^0(D,N)\otimes H^0(D, N\otimes \omega_D) } \\
@V{}VV @V{}VV \\
{\bigwedge^{g-k+2} H^0(C,L)\otimes H^0(C,\omega_C(\sum_{i} x_i+y_i ))} @>{\tilde{d}_{g-k+2,0}}>> \bigwedge^{g-k+1} H^0(C,L)\otimes H^0(C, L\otimes \omega_C(\sum_{i} x_i+y_i )). \\
\end{CD}
$$
We have an isomorphism $H^0(D,N) \simeq H^0(C,L)$, and $H^1(C,L)=0$, so Riemann--Roch implies $H^1(D,N)=0$. 
\medskip

The image of the restriction map $H^0(D,\omega_D)\hookrightarrow H^0(C,\omega_C(\sum_{i} x_i+y_i ))$ includes $H^0(C,\omega_C) \seq H^0(C,\omega_C(\sum_{i} x_i+y_i ))$. We have a natural commutative diagram, where the vertical arrows are injective:
$$\begin{CD}
{\bigwedge^{g-k+2} H^0(C,L)\otimes H^0(C,\omega_C)} @>{d'_{g-k+2,0}}>>{\bigwedge^{g-k+1} H^0(C,L)\otimes H^0(C, L\otimes \omega_C) } \\
@V{}VV @V{}VV \\
{\bigwedge^{g-k+2} H^0(C,L)\otimes H^0(C,\omega_C(\sum_{i} x_i+y_i ))} @>{\tilde{d}_{g-k+2,0}}>> \bigwedge^{g-k+1} H^0(C,L)\otimes H^0(C, L\otimes \omega_C(\sum_{i} x_i+y_i )). \\
\end{CD}$$
Thus if $K_{g-k+2,0}(C,\omega_C;L) \neq 0$, then there exists a nonzero element of $\text{Ker}(\tilde{d}_{g-k+2,0})$ which lies in the image of $$\bigwedge^{g-k+2} H^0(D,N)\otimes H^0(D,\omega_D) \to \bigwedge^{g-k+2} H^0(C,L)\otimes H^0(C,\omega_C(\sum_{i} x_i+y_i ))$$ and thus $d_{g-k+2,0}$ is non-injective, so $K_{g-k+2,0}(D,\omega_D;N)\neq 0$.
\medskip
\end{proof}
We are now in a position to prove the main theorem.
\begin{proof}[Proof of Theorem \ref{gonality}]
Assume $$L-K_C \notin C_{g-k+2}-C_{k-3}.$$ We need to show $K_{g-k,2}(C,L)=0$.
From Lemma \ref{good-lem}, it suffices to prove $K_{g-k,2}(D,N)=0$. From Lemma \ref{near-obvious} and Proposition \ref{marians-prop}, 
it suffices to show that the marked curve $[D] \in \overline{\mathcal{M}}_{2g-2k+3, 2(2g-2k+3)}$ lies outside $\overline{\mathfrak{Sec}}$. For this it is sufficient to show
$$H^0(D,\bigwedge^{g-k}M_{K_D}(2K_D-N))=0,$$
by \cite[Prop.\ 3.6]{farkas-popa-mustata}. Here $M_{K_D}$ is the kernel bundle, defined by the exact sequence
$$ 0 \to M_{K_D} \to H^0(D,K_D) \otimes \mathcal{O}_D \to K_D \to 0.$$ Equivalently, if $\phi_{K_D} \; : D \to \proj^{2g-2k+2}$ is the canonical morphism, then $M_{K_D}\simeq \phi_{K_D}^* \Omega_{\proj^{2g-2k+2}}(1)$. Note that $\phi_{K_D}$ is not an embedding; indeed each component $R_i$ is contracted to a point.

We define subcurves of $D$ as such: for $1\leq k < g-2k+3$ let $$D_k:=C \cup R_{k+1} \cup \ldots R_{g-2k+3} $$ and set $D_{g-2k+3}=C$. Define $N_i:=N_{|_{D_i}}$. The Mayer--Vietoris sequence gives
\begin{align*}
0 &\to \bigwedge^{g-k} M_{K_D} \otimes (2K_D-N) \to \bigwedge^{g-k} M_{K_{D_1(x_1+y_1)}}(2K_{D_1}-N_1+2x_1+2y_1) \oplus \\
 &\mathcal{O}_{R_1}(-1)^{2g-2k+2 \choose g-k} \to \bigwedge^{g-k} M_{K_D} \otimes (2K_D-N)_{|_{x_1,y_1}} \to 0,
\end{align*}
using that $M_{{K_D} _{|_{D_1}}} \simeq M_{K_{D_1(x_1+y_1)}}$ (note that restriction induces an isomorphism $H^0(D,K_D) \simeq H^0(D_1,K_{D_1}(x_1+y_1))$).
\medskip

 So it suffices to show that the evaluation map 
$$H^0(D_1, \bigwedge^{g-k} M_{K_{D_1(x_1+y_1)}}(2K_{D_1}-N_1+2x_1+2y_1) ) \to H^0(D, \bigwedge^{g-k} M_{K_{D_1(x_1+y_1)}}(2K_{D_1}-N_1+2x_1+2y_1)_{|_{x_1,y_1}})$$
is injective, or $$H^0(D_1, \bigwedge^{g-k} M_{K_{D_1(x_1+y_1)}}(2K_{D_1}-N_1+x_1+y_1) )=0.$$
We have a short exact sequence
$$0 \to \bigwedge^{g-k}M_{K_{D_1}}\to \bigwedge^{g-k}M_{K_{D_1(x_1+y_1)}} \to \bigwedge^{g-k-1}M_{K_{D_1}}(-x_1-y_1) \to 0 ,$$
see for instance \cite[Remark p.\ 345]{beauville-stable}.
So, it is enough to show the following two vanishings
\begin{align*}
H^0(D_1, \bigwedge^{g-k} M_{K_{D_1}}(2K_{D_1}-N_1+x_1+y_1) )&=0\\
H^0(D_1, \bigwedge^{g-k-1} M_{K_{D_1}}(2K_{D_1}-N_1) )&=0
\end{align*}
For any semistable curve $Y$ and vector bundle $E$ on $Y$ we define $\Theta_E$ as the set of line bundles $M$ with
$$H^0(Y,E \otimes M) \neq 0.$$ We define $C^{sm,i}$ as the intersection of $C$ with the smooth locus of $D_i$.
As $x_1,y_1$ are general, it is enough to satisfy the following two conditions
\begin{align}
2K_{D_1}-N_1+C^{sm,1}_2 &\nsubseteq \Theta_{\bigwedge^{g-k} M_{K_{D_1}}} \\
2K_{D_1}-N_1&\nsubseteq \Theta_{\bigwedge^{g-k-1} M_{K_{D_1}}}
\end{align}
where the notation $C^{sm,i}_d$ refers to the $d$-th symmetric product of $C^{sm,i}$.

Using the Mayer--Vietoris sequence
$$0 \to \mathcal{O}_{D_{i-1}} \to \mathcal{O}_{D_{i}} \oplus \mathcal{O}_{R_{i}} \to \mathcal{O}_{x_{i},y_{i}} \to 0 ,$$
together with
$$0 \to \bigwedge^{p}M_{K_{D_i}}\to \bigwedge^{p}M_{K_{D_i(x_i+y_i)}} \to \bigwedge^{p-1}M_{K_{D_i}}(-x_i-y_i) \to 0 ,$$
and the generality of $x_1,y_1, \ldots, x_{2(2g-2k+3)},y_{2(2g-2k+3)}$ we see that, in order to verify the conditions 
\begin{align}
2K_{D_i}-N_i+C^{sm,i}_{2(i-j)} \nsubseteq \Theta_{\bigwedge^{g-k-j} M_{K_{D_i}}}, \; 0 \leq j \leq  i,
\end{align} 
it is enough to verify 
\begin{align}
2K_{D_{i+1}}-N_{i+1}+C^{sm,i+1}_{2(i+1-j)} \nsubseteq \Theta_{\bigwedge^{g-k-j} M_{K_{D_{i+1}}}}, \; 0 \leq j \leq  i+1.
\end{align} 
Hence it is enough to verify 
\begin{align}
2K_{C}-L+C_{2(g-2k+3-j)} \nsubseteq \Theta_{\bigwedge^{g-k-j} M_{K_{C}}}, \; 0 \leq j \leq  g-2k+3,
\end{align} 
or
\begin{align*}
L-C_{2(g-2k+3-j)}-K_C \nsubseteq C_{k+j-1}-C_{g-k-j}, \; 0 \leq j \leq  g-2k+3,
\end{align*} 
by Serre duality and \cite[Prop.\ 3.6]{farkas-popa-mustata}. This follows from Lemma \ref{lem1} and Lemma \ref{lem2}.
\end{proof}
Theorem \ref{extremal} now follows easily from Theorem \ref{gonality}.
\begin{proof}[Proof of Theorem \ref{extremal}]
In the case $p>g-k$, then each line bundle $L \in \text{Pic}^{2g+p-k+3}(C)$ fails to be $p+1$ very ample, \cite{generic-secant}. Thus, by the known direction of the Secant conjecture,
$K_{p,2}(C,L) \neq 0$. So assume $p \leq g-k$, $H^1(C,2K_C-L)=0$, $H^1(C,L)=0$ and that $V_{g-p-3}^{g-p-4}(2K_C-L)$ has the expected dimension $g-k-p-1$. Note that this implies that $L$ is $p+1$ very ample. Indeed, otherwise $$ L-K_C \in C_{p+2}-C_{k-3}$$ which implies that
$$2K_C-L-C_{g-k-p} \subseteq K_C+C_{k-3}-C_{g-k+2}, $$ which gives $\dim V_{g-p-3}^{g-p-4}(2K_C-L) \geq g-k-p,$ using the assumption $H^1(C,2K_C-L)=0$. In fact, this last inclusion is \emph{equivalent} to $\dim V_{g-p-3}^{g-p-4}(2K_C-L) \geq g-k-p$; use that a $1$-d family of divisors must pass through any given point.
In particular, the previous discussion shows $L$ is base point free. For a general, effective divisor $D$ of degree $g-k-p$, the argument above gives $L(D)-K_C \notin C_{g-k+2}-C_{k-3}$. By Theorem \ref{gonality}, we have 
$$K_{g-k,2}(C,L(D))=0. $$ By \cite[Prop.\ 2.1]{generic-secant} this implies $K_{p,2}(C,L)=0$.

\end{proof}
\begin{proof}[Proof of Corollary \ref{cor}]
As we are assuming $k \geq 3$, the inequality $p \leq g-k$ implies that $\deg(L-K_C) \leq g-1$, so we have $H^1(C,2K_C-L)=0$ for a general $L \in \text{Pic}^{2g+p-k+3}(C)$. To show that the condition ``$V_{g-p-3}^{g-p-4}(2K_C-L)$ has the expected dimension $g-k-p-1$'' holds, for $C$ a general $k$-gonal curve and $L$ general with $H^1(C,L)=H^1(C,2K_C-L)=0$, we need to show that
$$ 2K_C-L-C_{g-k-p} \nsubseteq K_C+C_{k-3}-C_{g-k+2},$$
see \cite{generic-secant}. This is equivalent to showing
$$ L-K_C+C_{g-k-p} \nsubseteq C_{g-k+2}-C_{k-3}.$$ For this, we may specialise $C$ to a hyperelliptic curve, as the $k$-gonality stratum in $\mathcal{M}_g$ contains the locus of hyperelliptic curves. In this case, the condition 
$$ L-K_C+C_{g-k-p} \subseteq C_{g-k+2}-C_{k-3}$$ implies
$$L-K_C \in C_{p+2}-C_{k-3}, $$
by \cite[Prop.\ 2.7]{generic-secant}. Under the assumption $p \leq g-k$, if $L$ is a general line bundle of degree $2g+p-k+3$, then $L-K_C$ does not lie in $C_{p+2}-C_{k-3}$. This completes the proof.
\end{proof}

\end{document}